\newtheorem{theorem}{Theorem}
\newtheorem*{theorem*}{Theorem}
\newtheorem{corollary}{Corollary}
\newtheorem{proposition}{Proposition}
\newtheorem{lemma}{Lemma}
\newtheorem{conjecture}{Conjecture}
\theoremstyle{definition}
\newtheorem{definition}{Definition}
\newtheorem{remark}{Remark}
\def \eps {\varepsilon}
\def\ph{\varphi}
\def\pa{\partial}
\title{Singular points  in generic two-parameter families of vector fields on 2-manifold\footnote{This a preprint of the Work accepted for publication in Regular and Chaotic Dynamics Journal, © Pleiades Publishing, Ltd., 2025.; https://pleiades.online.}}
\author{D.A. Filimonov, Yu. S. Ilyashenko}
\date{\today}
\begin{document}

\maketitle

\begin{abstract}
In this paper, we give a full description of all possible  singular points that occur in  generic 2-parameter families of vector fields on compact 2-manifolds. This is a part of a large project aimed to a complete study of global bifurcations in two-parameter families of vector fields on the two-sphere.
\end{abstract}

\begin{flushright}
	\textit{In memory of Leonid Pavlovich Shilnikov.}
\end{flushright}

\section*{Introduction}\label{sec:intro}

An arbitrary dynamical system may exhibit an extremely difficult behavior but such systems are ``rare''. The exact sense of rareness (or more precisely its opposite) is a notion of genericity of a system. The same question may be asked for bifurcations or more generally for finite-parameter families of dynamical systems. In fact, the codimension of a bifurcation is usually meant to be the the lowest number of parameters for which such a bifurcation (and hence a corresponding family) is generic. In our work we deal with finite-parameter families of vector fields on surfaces.
\subsection{Main results}

In this paper, we give a full description of all possible combinations of singular points that occur in generic 2-parameter families of vector fields on compact 2-manifolds. This is a part of a large project aimed to a complete study of global bifurcations in two-parameter families of vector fields on the two-sphere. A first problem to solve is to classify all the degeneracies that may occur in such families. An analogous problem for one-parameter families was solved by Sotomayor in a fundamental paper~\cite{Sotomayor1974}. In particular, he solved an easier problem analogous to one that we consider by a perturbation techniques. These perturbations require many cumbersome calculations. Our main tool is the multijet transversality theorem (published almost simultaneously with the Sotomayor's paper); it allows us to omit almost any calculations.

In addition, for generic  $k$-parameter families we prove the finiteness of the sums of multiplicities of all singular points in the analytical case.

Informally, our main result is the following.

\begin{theorem}\label{thm:sing-residual}
	Vector fields that occur in generic $C^5$-smooth two-parameter families of vector fields on a 2-manifold have  a finite number of singular points. At most two of them are non-hyperbolic and belong to one of the following classes:
	
	an Andronov -- Hopf point with the non-zero first or second focus value (Lyapunov value);
	
	a saddle-node of multiplicity no more than three;
	
	a cuspidal (i.e. Bogdanov - Takens) point with no other degeneracies.
\end{theorem}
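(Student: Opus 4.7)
The plan is to apply the multijet transversality theorem to the jet extension of the family. Let $M$ denote the compact 2-manifold and $B$ the 2-dimensional parameter space. A $C^5$ family $\{v_b\}_{b \in B}$ of vector fields defines, for each $k \le 5$, the joint $k$-jet map
\[
j^k V : B \times M \to J^k(TM), \qquad (b,p) \mapsto j^k v_b(p),
\]
and for each $r \ge 1$ the corresponding $r$-multijet map on $B \times M^{(r)}$, where $M^{(r)}$ is the configuration space of ordered $r$-tuples of pairwise distinct points. By the multijet transversality theorem, for a residual set of families in the appropriate $C^5$-topology both maps are transverse to any prescribed finite collection of $C^1$-smooth strata in the (multi-)jet bundle.

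Next I would stratify $J^k(TM)$ according to the local type of the singular point. The locus of singular points has codimension $2$ in the fibre, and inside it the relevant substrata and their codimensions are as follows. A saddle-node of multiplicity $m$, that is, one nonzero eigenvalue together with center-manifold reduction $\dot{x} = a_m x^m + O(x^{m+1})$ with $a_m \ne 0$, has codimension $m + 1$. An Andronov--Hopf point with $L_1 = \cdots = L_j = 0$ and $L_{j+1} \ne 0$ has codimension $3 + j$. A Bogdanov--Takens point with non-degenerate quadratic part has codimension $4$, and any further degeneration raises the codimension to at least $5$. These codimensions are verified by a routine computation in local coordinates, using the center-manifold theorem and the explicit formulas for the focus values.

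Since $\dim(B \times M) = 4$, transversality of $j^k V$ to a stratum of codimension $c$ produces a preimage of the same codimension in $B \times M$, which is empty whenever $c > 4$. This immediately excludes saddle-nodes of multiplicity $\ge 4$, Hopf points with $L_1 = L_2 = 0$, and all degenerate Bogdanov--Takens points, leaving exactly the three classes of non-hyperbolic singularities listed in the theorem. Each of the surviving normal forms has an isolated zero at the origin, so every singular point of $v_b$ is isolated, and the compactness of $M$ then forces their number to be finite. The bound on the number of non-hyperbolic points comes from the multijet version: if $v_b$ has $s$ hyperbolic and $r - s$ non-hyperbolic singular points, the corresponding multi-stratum in $J^k_r(TM)$ has codimension at least $2s + 3(r - s) = 3r - s$, and transversality forces $3r - s \le \dim(B \times M^{(r)}) = 2 + 2r$, i.e.\ $r - s \le 2$. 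In the extremal case $r - s = 2$ the inequality is tight, so both non-hyperbolic points must lie in a codimension-$3$ stratum and are therefore each either a generic Andronov--Hopf point or a simple (multiplicity-$2$) saddle-node; this is consistent with the statement of the theorem.

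The main technical obstacle is the clean realization of each degeneracy class as a locally closed $C^1$-smooth stratum with the advertised codimension. This requires center-manifold reduction with sufficient smoothness (which is why the assumption is $C^5$ rather than, say, $C^2$), verification that the focus values $L_j$ and the saddle-node coefficients $a_m$ depend smoothly on a suitably high jet, and a careful justification that the transversality statement obtained in the $C^5$ setting is strong enough to exclude every codimension-$5$ phenomenon that arises in the argument.
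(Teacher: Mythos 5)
Your overall strategy coincides with the paper's: stratify the (multi-)jet space by degeneracy type, count codimensions, and invoke the multijet transversality theorem so that strata of codimension exceeding the source dimension ($4$ for one point, $8$ for two) are avoided by a residual set of families. The codimension arithmetic you give is correct and matches the paper's Cases 1 and 2, and your finiteness argument (isolated zeros plus compactness) is the one used there. However, there is a genuine gap exactly where you write that the codimensions of the strata ``are verified by a routine computation in local coordinates, using the center-manifold theorem and the explicit formulas for the focus values.'' This is not routine, and it is precisely the point the paper identifies as the unjustified step in Takens and Dumortier. The classes $AH_k$ and $SN_k$ are defined as the sets of \emph{all} jets equivalent to a normal-form jet in $AH^n_k$ or $SN^n_k$; their codimension in the normal-form slice is obvious, but to conclude that the full saturated set $\bigcup_{v}\mathrm{orb}\,v$ has the same codimension one must show that every orbit of the jet group $D^m$ through such a jet has the same dimension, equivalently that the stabilizer (centralizer) has constant dimension. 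The paper proves this as Theorem~\ref{thm:codim} via Lemmas~\ref{lem:cah} and~\ref{lem:csn}, by solving the commutation equations $[v,w]=0$ in the jet space and showing $\dim\mathrm{Cent}\,v=2$. Your proposal contains no substitute for this step; the center-manifold theorem does not supply it, because the issue is the geometry of the degeneracy locus in jet space, not the local dynamics. (For the specific low-order cases needed here --- first and second focus values, saddle-node multiplicity $\le 3$ --- the paper concedes that explicit jet formulas for $L_1,L_2$ would work for $AH$, so your route could be completed by brute force, but as written the claim is asserted, not proved. One also needs these loci to be semi-algebraic, hence stratified, to apply transversality to them; Takens' Theorem~\ref{thm:tak} supplies this in the paper.)

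Two smaller points. First, your stratification omits the class $ZL$ of singular points with identically zero linear part; it is not a saddle-node, Hopf, or Bogdanov--Takens point, yet it must be excluded, which the paper does by noting it has codimension $4$ among jets at a singular point (codimension $6$ in your normalization), hence is avoided. Second, your multijet count is carried out over $B\times M^{(r)}$, whereas the transversality theorem of Mather/Francis applies to multijets over the configuration space of the full source $(B\times M)^{(r)}$; one must instead impose the diagonal conditions $\varepsilon^1=\dots=\varepsilon^r$ as part of the stratum (contributing $2(r-1)$ to its codimension, as in \eqref{eq:deg_mfd} and \eqref{eqn:twoj}). The arithmetic comes out the same ($r-s\le 2$), but the parametrized version you implicitly use is not the theorem as stated and should not be invoked without this rewriting.
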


A more detailed statement is given in Section \ref{sec:twopar}

\begin{remark}
	Aforementioned Andronov-Hopf points are usually called weak foci but we would like to keep our naming in order to emphasize the relation to a bifurcation with the same name. The same applies to saddle-nodes of higher multiplicities which may be topologically equivalent to a saddle or a node. A cuspidal point here is a degenerate singular point with nilpotent linear part but only those which occurs in Bogdanov-Takens bifurcation.
\end{remark}

Another result is the following

\begin{theorem}\label{thm:k_non_hyp_points}
	Vector fields in generic $k$-parameter families of $C^2$ vector fields on $M^2$ do not have more than $k$ non-hyperbolic singular points.
\end{theorem}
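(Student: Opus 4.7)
The strategy is to reduce the claim to a dimension count via the multijet transversality theorem. Let $\Sigma \subset J^1(M, TM)$ be the subset of $1$-jets $(p, w, A)$ with $w = 0$ and $A : T_p M \to T_p M$ non-hyperbolic. The vanishing condition $w = 0$ is of codimension $2$. Non-hyperbolicity of a real $2\times 2$ matrix is the semi-algebraic union of two disjoint strata, each of codimension $1$ in matrix space: $\{\det A = 0\}$ (a real zero eigenvalue), and $\{\operatorname{tr} A = 0,\ \det A > 0\}$ (purely imaginary eigenvalues). Hence $\Sigma$ is a union of two smooth submanifolds of $J^1(M, TM)$, each of codimension $3$.

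Given a $C^2$ family $\{v_\lambda\}_{\lambda \in B}$ with $B$ a $k$-dimensional parameter ball, I would form the $(k+1)$-fold multijet extension with parameters,
\[
\Phi(x_1, \dots, x_{k+1}, \lambda) \;=\; \bigl(j^1_{x_1} v_\lambda,\ \dots,\ j^1_{x_{k+1}} v_\lambda\bigr),
\]
defined on $M^{(k+1)} \times B$, where $M^{(k+1)}$ denotes the ordered configuration space of $(k+1)$ pairwise distinct points of $M$. The preimage $\Phi^{-1}\bigl(\Sigma^{(k+1)}\bigr)$ is exactly the set of configurations at which $v_\lambda$ has $k+1$ distinct non-hyperbolic singular points located at $x_1, \dots, x_{k+1}$. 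The parametric version of the multijet transversality theorem provides a residual set of $C^2$ families for which $\Phi$ is transverse to each smooth stratum of $\Sigma^{(k+1)}$.

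Now the dimension count. The source has dimension $2(k+1) + k = 3k + 2$, while each stratum of $\Sigma^{(k+1)}$ has codimension $3(k+1) = 3k + 3$ in the multijet target. Since the source dimension is strictly smaller than this codimension, transversality forces $\Phi^{-1}\bigl(\Sigma^{(k+1)}\bigr) = \varnothing$ for every family in the residual set. Consequently, no parameter value of a generic family can support $k+1$ or more non-hyperbolic singular points, which is the stated bound.

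The only points requiring care are (a) the stratified nature of $\Sigma$, which is handled by invoking transversality separately on each of the two smooth strata of $\Sigma^{(k+1)}$ and intersecting the resulting residual conditions, and (b) checking that the multijet transversality theorem is applicable at the chosen regularity $C^2$ for $1$-jets; both are standard. As elsewhere in the paper, the key virtue of this approach is that it replaces any Sotomayor-style perturbative calculation by a clean transversality/dimension argument.
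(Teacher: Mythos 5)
Your proposal is correct in substance and follows the same strategy as the paper: a codimension count in a multijet space of $1$-jets at $k+1$ distinct points, with non-hyperbolicity contributing one extra codimension at each point on top of the two from $v=0$, so that the source dimension falls short of the codimension by exactly one and transversality forces empty intersection. The one place where you differ --- and where your write-up is looser than the paper's --- is the handling of the parameter. You keep a single shared $\lambda$ in the source and invoke a ``parametric version of the multijet transversality theorem'' for the map $\Phi(x_1,\dots,x_{k+1},\lambda)=(j^1_{x_1}v_\lambda,\dots,j^1_{x_{k+1}}v_\lambda)$. That object is not the multijet extension of any single map, so the Mather/Francis theorem does not apply to it verbatim, and given that the entire point of this paper is to avoid appeals to unproved folklore, you should not wave this step through as ``standard.'' The paper's device is to treat the family as one map $V\colon B^k\times M^2\to\mathbb{R}^2$, take its honest $(k+1)$-multi-$1$-jet extension (so each of the $k+1$ source points carries its own copy $\varepsilon^i$ of the parameter), and impose the diagonal condition $\varepsilon^1=\cdots=\varepsilon^{k+1}$ as $k^2$ additional equations defining the target set $W$; the count $rn-c=(k+1)(k+2)-(k^2+3(k+1))=-1$ then reproduces your $-1$ and the quoted theorem applies as stated. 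Your stratification of non-hyperbolicity into $\{\det A=0\}$ and $\{\operatorname{tr}A=0,\ \det A>0\}$ is fine for the codimension count (though note $\{\det A=0\}$ is itself singular at $A=0$, so strictly you need a further semi-algebraic stratification, as the paper does by working with the single polynomial condition $\det\cdot\operatorname{tr}=0$ and its strata); and like the paper you must still localize to disjoint coordinate charts to speak of vector fields as maps to $\mathbb{R}^2$, which you gloss over by working in $J^1(M,TM)$ directly.
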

A more detailed statement is given in Section \ref{sec:kpar}.

\begin{remark}
  Wording ``Vector fields in generic  families have this and this property'' means: ``The set of families whose vector fields have this and this property is generic.''
\end{remark}

\subsection{Criticism}
An expert may say that these results  are trivial due to the following general principle: generic $k$-parameter families cannot display bifurcations of codimension more than $k$.

A few years ago the second author would agree with this opinion. He have used the principle quoted above many times when he worked on the books~\cite{AAISh1986} and~\cite{IlyashenkoLi1999}. But recently the first author attracted his attention to the fact that this principle is not a theorem. Indeed, what is its proof?

\emph{The vector fields with the degeneracies of codimension $k+1$ form a manifold of codimension $k+1$ in an appropriate ambient  functional space. By the transversality theorem, generic $k$-parameter family has no intersection with such a manifold.}

What transversality theorem is meant here? The classical one deals with the ambient space of finite dimension. But in our case the ambient space is a Banach space of infinite dimension. The transversality theorem for this case is proved in~\cite{AbrahamRobbin1967}. In this theorem a manifold to which the family should be transversal is a (smooth) Banach submanifold. But even the vector fields with a degenerate singular point form a Banach submanifold with singularities. The Abraham transversality theorem cannot be directly applied to such a ``submanifold''.

\subsection{Revision}

It seems to be a difficult problem to transform the heuristic principle above to a rigorous theorem. Vector field with degeneracies usually form not smooth but stratified submanifolds (this should be proved separately for any particular degeneracy), and the transversality theorem is not stated for this case, no matter how easy it will be. So the statements like theorems 1 and 2 should be proved without the reference to the heuristic principle stated above.

Another problem is that such Banach submanifolds of vector fields even for degeneracies of codimension 1 are not that easy to construct and are sometimes badly immersed (and not embedded) into ambient space. 50 years ago Sotomayor published a large paper [4] where he proved the following statement.

\begin{theorem}
	In generic one-parameter families of vector fields on a compact 2-manifold only the degeneracies of six types may occur. There is a full description of these types.
\end{theorem}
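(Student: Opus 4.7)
The plan is to decompose the statement into a local part and a semi-local/global part, then handle each stratum of ``bad'' vector fields by an application of the jet and multijet transversality theorems, arranging matters so that a generic one-parameter family crosses each stratum transversally only at isolated parameter values and never hits the intersection of two such strata.

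First I would enumerate the six candidate degeneracies and sort them by locality: the purely local ones (saddle-node with nonzero quadratic part, weak focus of order~$1$ with nonzero first Lyapunov value) concern a finite jet of the vector field at a single singular point; a semi-local one concerns a non-hyperbolic periodic orbit of multiplicity~$2$; and the global ones concern separatrix connections (homoclinic loop of a hyperbolic saddle, heteroclinic saddle-to-saddle connection, and a saddle-node to saddle separatrix). For each local type one writes the defining equations on a jet bundle $J^k(M, TM)$ and checks that they cut out a smooth submanifold of codimension~$1$ after deleting the higher-codimension substrata; Thom's jet transversality theorem then gives the local part of the statement almost immediately, exactly as the Introduction advertises.

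The substantial work lies with the non-local strata, which do not sit inside any finite jet bundle. Here my plan is, for each candidate (double cycle, homoclinic loop, heteroclinic connection, saddle to saddle-node separatrix), to set up a Poincar\'e return or correspondence map on transverse sections and to reduce the occurrence of the degeneracy to the vanishing of a single \emph{displacement function} whose parameter-derivative is shown to be generically nonzero by a Melnikov-type integral (for connections) or by the first return map derivative (for the double cycle). This produces the stratum as an honest codimension-$1$ Banach submanifold of the functional space of vector fields, to which the Abraham--Robbin infinite-dimensional transversality theorem applies. The main obstacle, explicitly flagged in the Criticism and Revision subsections above, is precisely this step: certifying that these naive strata are smooth Banach submanifolds (rather than only stratified sets) and that the Melnikov-type derivative is nonzero on a residual set of the parameter.

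To finish, I would invoke the multijet transversality theorem: applied to tuples of distinguished singular points together with the data of the candidate connections, it forces any pair of the codimension-$1$ strata above to meet in codimension~$\ge 2$, and hence to be avoided by a generic one-parameter family. A short case analysis based on the Jordan form of the linearisation at a degenerate singular point and on the combinatorics of separatrix behaviour then shows that any additional codimension-$1$ condition either coincides with one of the six already listed or is forced to have codimension at least~$2$, so the six listed types exhaust the admissible degeneracies. This recovers Sotomayor's classification without the perturbative calculations of~\cite{Sotomayor1974}, with multijet transversality doing essentially all of the work.
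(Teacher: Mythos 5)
This theorem is not proved in the paper at all: it is Sotomayor's result, quoted from~\cite{Sotomayor1974}, and the surrounding text (the Criticism and Revision subsections) is devoted precisely to explaining why the kind of argument you sketch does not constitute a proof. Your proposal has a genuine gap in two places. First, for the non-local degeneracies (double limit cycle, homoclinic loop, heteroclinic and saddle-node separatrix connections) you acknowledge that one must certify the strata as smooth Banach submanifolds before Abraham--Robbin transversality can be invoked, but you then proceed as if this were done. It is not: a displacement function with nonzero Melnikov-type parameter-derivative shows that a \emph{particular} family crosses the stratum transversally, which is not the same as exhibiting the stratum as an embedded (rather than badly immersed) Banach submanifold of $Vect^q(M^2)$ and is exactly the ``lot of work'' the paper says cannot be skipped. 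Second, your concluding step --- using the multijet transversality theorem to force pairs of codimension-$1$ strata to meet in codimension $\ge 2$ --- cannot work for the non-local strata: the multijet transversality theorem lives in finite-dimensional spaces of $r$-multi-$m$-jets, and conditions such as ``there is a separatrix connection'' or ``there is a multiplicity-$2$ periodic orbit'' are not expressible as subsets of any $\prescript{}{r}J^m(A,P)$, since they depend on the global flow and not on finitely many finite jets. The same objection applies to the ``short case analysis'' claimed to show the six types are exhaustive: for the non-local types this exhaustiveness is the main content of Sotomayor's classification, not a corollary of transversality.

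What the present paper actually does is much more modest and is consistent with these obstructions: it restricts attention to the purely local degeneracies (singular points), where the defining conditions genuinely are semi-algebraic subsets of finite jet spaces (via the Takens form of Thom's theorem, Theorem~\ref{thm:tak}, and the codimension computation of Theorem~\ref{thm:codim}), so that the jet and multijet transversality theorems apply rigorously. The local half of your plan is therefore sound in spirit and close to the method of Sections~\ref{sec:kpar} and~\ref{sec:twopar}; the non-local half is a restatement of the heuristic principle that the paper explicitly declines to treat as a theorem.
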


This result easily follows from the principle stated above. But Sotomayor does not use it. What is more important, this principle is not a theorem. Hence, the large and technical paper~\cite{Sotomayor1974} may not be drastically simplified even now. Even the use of aforementioned theorem by Abraham and Robbin needs a lot of work of constructing Banach manifolds for all degeneracies of all codimensions higher than one and requires an analysis of their properties.

The general goal of the authors is to prove an analogous statement for two-parameter families.

\begin{conjecture}
	In generic two-parameter families of vector fields on a surface only the following degeneracies may occur:
	
	-- one or two simultaneous degeneracies from the list in Sotomayor's theorem;
	
	-- exactly one degenenacy from a new finite list.
\end{conjecture}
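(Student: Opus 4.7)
The plan is to reduce the conjecture to multijet transversality, in the same spirit as Theorem~\ref{thm:sing-residual} is handled for singular points, splitting the argument into a local part (degeneracies localized at finitely many points of $M$) and a non-local part (saddle connections, homoclinic loops, polycycles, degenerate limit cycles), since only the first fits Mather's framework directly.

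For the local part, first I would compile a complete list of codimension $\le 2$ local degeneracies of smooth vector fields on $M^2$. The codimension-$1$ entries recover the Sotomayor items (simple saddle-node, weak focus with nonzero first Lyapunov value, saddle-node of a limit cycle, etc.). The codimension-$2$ entries form the proposed ``new finite list'' and include: saddle-node of multiplicity $3$, Bautin point (weak focus with vanishing first but nonzero second Lyapunov value), Bogdanov--Takens cuspidal point, and the corresponding cycle-level items (cusp of limit cycles, degenerate saddle-node cycle, strong resonances of the Poincar\'e map, etc.). For each entry I would exhibit the set of its realizations as a stratified subset of an appropriate $r$-jet bundle $J^{r}(M,TM)$ of the correct codimension, and then apply jet/multijet transversality to the evaluation map $M\times B\to J^{r}(M,TM)$, where $B$ is the parameter base. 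Pairs of Sotomayor degeneracies at distinct points of $M$ are handled by the two-fold multijet space $J^{r}_{2}(M,TM)$: since each component has codimension~$1$ and the conditions are imposed on disjoint source-fibers, the product stratum has codimension~$2$ and is met by a generic family only at isolated parameter values.

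For the non-local part, I would first list the codimension-$2$ non-local degeneracies: two simultaneous saddle connections, a saddle connection plus a local codimension-$1$ degeneracy, a resonant homoclinic loop of a hyperbolic saddle, a homoclinic loop to a saddle-node, etc. For each I would realize the condition as the zero set of an explicit auxiliary map (a separatrix displacement, a Poincar\'e first-return map minus identity, an eigenvalue ratio) and, using perturbations supported in a flow-box away from the special orbits, verify that this map is a submersion onto its target. That yields a genuine Banach submanifold to which the Abraham--Robbin theorem applies, or more commonly a finite-dimensional reduction on which the classical transversality theorem suffices.

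The main obstacle is precisely the non-local part. As the authors emphasize in the \emph{Criticism} and \emph{Revision} subsections, the loci of vector fields with such global degeneracies are typically only immersed and stratified, not embedded smooth submanifolds, and they have to be constructed ad hoc for each phenomenon; checking the required transversality of the auxiliary maps is where the real work lies, and it is what prevents the heuristic principle ``codim $>k$ is avoided by generic $k$-parameter families'' from being a one-line consequence of Abraham--Robbin. A secondary difficulty is exhaustiveness: one must verify that every codimension-$2$ phenomenon on $M^{2}$ is already covered, which requires a separate classification of $2$-jets of degenerate equilibria and cycles together with a complete enumeration of possible global connection patterns.
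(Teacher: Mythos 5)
The statement you have been asked to prove is stated in the paper as a \emph{conjecture}, and the paper does not prove it: the authors say explicitly that the present work is only ``a first step in this direction'', namely the classification of the singular points themselves (Theorems \ref{thm:sing-residual} and \ref{thm:k_non_hyp_points}). Accordingly, your text is a research program rather than a proof, and you say so yourself when you identify the non-local part as ``the main obstacle''. The local half of your program does coincide with what the paper actually carries out: cover the degeneracy locus by compacta projecting into disjoint charts, realize each class of degenerate jets as a semi-algebraic stratum of known codimension in the (multi)jet space, and apply the Francis/Mather multijet transversality theorem with a dimension count (codimension of the stratum exceeding the dimension of the source $\left(B^2\times\mathcal{D}^2\right)^r$). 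But even there your phrase ``exhibit the set of its realizations as a stratified subset of the correct codimension'' hides a genuine step: the codimension is read off from \emph{normal forms}, and transferring it to the full set of equivalent jets is exactly the content of Theorem \ref{thm:codim}, which the paper proves via the centralizer Lemmas \ref{lem:cah} and \ref{lem:csn} together with Takens' semi-algebraicity theorem. The authors stress that this transfer ``seems to be obvious, but should be justified''; your sketch assumes it.

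The non-local half is where the conjecture remains open, and your proposal does not close it. Writing ``I would realize the condition as the zero set of an explicit auxiliary map \ldots{} and verify that this map is a submersion'' is a restatement of the problem, not a solution: the loci of vector fields with global degeneracies (saddle connections, degenerate cycles, polycycles) are generally only stratified and badly immersed, so there is no single Banach submanifold to which Abraham--Robbin applies, and the required transversality of separatrix-splitting or return-map functionals must be established degeneracy by degeneracy. This is precisely the point of the paper's \emph{Criticism} and \emph{Revision} subsections: the heuristic ``codimension $>k$ is avoided generically'' is not a theorem, and Sotomayor's codimension-one case already required a long technical paper. The same applies to your exhaustiveness concern: a complete enumeration of codimension-two global phenomena is itself unproved. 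In short, your proposal correctly identifies the structure of a future proof and matches the paper's method on the part the paper actually proves, but it does not constitute a proof of the conjecture, and neither does the paper claim one.
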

This will be a rigorous result, and not a consequence of the heuristic principle that was neither proved nor even stated as a theorem.

The present paper is a first step in this direction.

\begin{remark}
	Since our goal is to use in future the infinite dimensional version of transversality theorem fo Banach spaces it forces us to chose a finite-smoothness class of vector fields since $C^\infty$ ones does not form a Banach space. Nevertheless, all the result of the present work are valid for $C^\infty$ and $C^\omega$ families of vector fields.
\end{remark}

\section{Preliminaries} \label{sec:prel}

Two main theorems \ref{thm:sing-residual} and \ref{thm:k_non_hyp_points} of the present paper deal with finite-parameter families of $C^q$ vector fields on compact 2-manifolds $M^2$. The dependence on the parameters is also $C^q$.

\begin{definition}
A base $B^k$ of a family is an open set in $\mathbb R^k$ homeomorphic to a ball and containing
$0: B^k = (\mathbb R^k,0).$ By $Vect^q(M^2)$ we denote the set of all $C^q$ vector fields on $M^2$.
A $C^q$ family $V$ of vector fields on $M^2$ is a $C^q$ map $B^k \to Vect^q(M^2), \eps \mapsto v_\eps$.
\end{definition}

Formally speaking, a vector field is not a map between manifolds. Vector fields on a manifold are sections of a tangent bundle over $M^2$. In order to use any form of the transversality theorem in finite dimensional versions, we have to pass to maps between manifolds. Consider a finite atlas on $M^2$. In any chart $(U_\alpha, \varphi_\alpha)$ a vector field $v$ may be seen as a map:
\begin{equation}
	v\big|_{\varphi_\alpha(U_\alpha)} : D_\alpha \to \mathbb{R}^2
\end{equation}
where $D_\alpha=\varphi_\alpha(U_\alpha)$ is a disc in $\mathbb{R}^2$. Thus any $k$-parameter family of vector fields restricted to an image of this chart is simply a map $B^k\times D_\alpha\to \mathbb{R}^2$.

\subsection{Genericity and transversality}

Let us recall some well known definitions.

There are several ways to formalize the notion of genericity (topological, metrical, several ways to define metrical genericity for infinite dimensional spaces). For the present work we use genericity in a topological sense.

\begin{definition}
	The set is generic (in a topological sense) if it contains a residual subset, i.e. a subset equal to a countable intersection of open dense sets.
\end{definition}

One of the ways to obtain a generic set is the use of Thom's transversality theorem.

\begin{definition}
	A map $f:A\to P$ is \emph{transversal} to a submanifold $W\subset P$ if for every $a \in A$ either $w=f(a)$ is not in $W$, or $w\in W$, and $Df(T_a A) + T_w W = T_w P$. In particular if $\dim A < \mathrm{codim}\, W$ the transversality implies non-intersection of $f(A)$ and $W$.
\end{definition}

The most simple transversality theorem is the following.

\begin{theorem}
	Let $A$ and $P$ be two smooth manifolds where $n=\dim A$ and $\dim P < \infty$, $W \subset P$ be a smooth submanifold  of $P$ of codimension $c$. Then for $q>\max(0,n-c)$, the set of $C^q$-smooth maps
 $f: A \to P$ transversal to $W$ is residual.
\end{theorem}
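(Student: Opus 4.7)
The plan is to reduce to a local statement via countable exhaustion, and then to prove the local statement by combining a compactness-based openness argument with a density argument powered by Sard's theorem.

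First I would fix an atlas so that both $A$ and the ambient manifold $P$ are covered by countably many coordinate charts, and I would arrange $W$ to be covered by countably many open sets $W_j \subset P$ in which $W$ is cut out by coordinates, i.e.\ in local coordinates $y=(y_1,\dots,y_d)$ on an open $V_j \supset W_j$ we have $W\cap V_j = \{y_1=\cdots=y_c=0\}$. I would also exhaust $A$ by an increasing sequence of compact sets $K_i$ each contained in a single chart of $A$. The space $C^q(A,P)$ with the weak (Whitney) topology is a Baire space, and the set of maps transversal to $W$ is the intersection over all $(i,j)$ of the sets
\begin{equation*}
T_{i,j} = \{\, f \in C^q(A,P) : f \text{ is transversal to } W \text{ at every } x \in K_i \text{ with } f(x) \in W_j\,\}.
\end{equation*}
A countable intersection of residual sets is residual, so it suffices to show each $T_{i,j}$ is open and dense.

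Openness is the standard compact stability argument: transversality is an open condition in the $1$-jet (it requires that the differential together with $T_{f(x)}W$ span $T_{f(x)}P$, which is an open condition on the jet at each point), and $K_i \cap f^{-1}(\overline{W_j})$ is compact, so small $C^1$ perturbations of $f$ preserve transversality on this set. A minor point is that points of $K_i$ with $f(x)$ near but not on $W_j$ must also be handled, but this is automatic by taking $W_j$ slightly shrunk.

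For density, I work in local coordinates $(x,y) \in \varphi_\alpha(K_i) \times V_j$, where I may assume $f(K_i) \subset V_j$ (otherwise no condition is imposed). Let $\pi : \mathbb{R}^d \to \mathbb{R}^c$ be the projection on the first $c$ coordinates, so that in these coordinates $f$ is transversal to $W$ on $K_i$ iff $0$ is a regular value of $\pi\circ f|_{K_i}$. Consider the perturbed family $f_a(x)= f(x)+\iota(a)$ for $a$ in a small ball of $\mathbb{R}^d$ (identifying $V_j$ with an open set in $\mathbb{R}^d$ and using a cutoff to make the perturbation global on $A$). Then $\pi \circ f_a = \pi\circ f + \pi a$, so $0$ is regular for $\pi\circ f_a$ iff $-\pi a$ is a regular value of $\pi\circ f$. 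Here $\pi \circ f$ is a $C^q$ map from an $n$-manifold to $\mathbb{R}^c$, and Sard's theorem applies precisely in the range $q > \max(0, n-c)$, giving regular values arbitrarily close to $0$. Choosing such $a$ arbitrarily small produces a $C^q$-small perturbation of $f$ lying in $T_{i,j}$, which proves density.

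The only real obstacle is bookkeeping: ensuring that the countable cover, the cutoffs used to globalize the perturbation, and the choice of topology on $C^q(A,P)$ all match up so that ``residual'' is a meaningful Baire-category statement. The analytic heart of the argument is entirely Sard's theorem, and the hypothesis $q > \max(0, n-c)$ is there for exactly that reason; everything else is a reduction.
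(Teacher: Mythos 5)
The paper does not prove this statement: it is quoted in the preliminaries as the classical elementary transversality theorem, so there is no argument of the authors' to compare yours against. Your proposal is the standard textbook proof (countable reduction to sets $T_{i,j}$ that are open by compactness and dense by a translation perturbation plus Sard's theorem), and it is correct in outline; the hypothesis $q>\max(0,n-c)$ is indeed exactly the Sard hypothesis for the composed map $\pi\circ f\colon A\to\mathbb{R}^c$. Two points deserve slightly more care than you give them. First, ``I may assume $f(K_i)\subset V_j$ (otherwise no condition is imposed)'' is not accurate: the condition is imposed on the compact set $K_i\cap f^{-1}(\overline{W_j})$, which can be a proper nonempty subset of $K_i$ even when $f(K_i)\not\subset V_j$, so the perturbation must be localized to a compact neighborhood of that set inside $f^{-1}(V_j)$. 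Second, once you multiply the translation by a cutoff $\chi$, the identity $\pi\circ f_a=\pi\circ f+\pi a$ only holds where $\chi\equiv 1$, so you must arrange $\chi\equiv 1$ on a neighborhood of $K_i\cap f^{-1}(\overline{W_j})$ and check that for small $a$ no point where $\chi\not\equiv 1$ is pushed onto $W_j$. Both are the routine bookkeeping you acknowledge, and neither invalidates the argument.
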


The space $U$ is called the source, and $P$ the target space.

\subsection{Jet and multijet transversality theorems}

The previous theorem deals with the first derivatives of $f$. But sometimes we need to study the higher derivatives. For this, jets and jet extensions of maps are considered.

\begin{definition}
An $m$-jet of a map $f: A \to P$ at a point $a$ is a set of all $C^m$-smooth maps $g$ defined in some neighborhood  of $a$ in $P$ such that $g(x) - f(x) = o(|x-u|^m)$  as $x \to a$. We denote it by $j^m_a f$.
\end{definition}
If for some representative $f$ of a jet $\{x_1,\ldots,x_n\}$ are local coordinates near $a=\{a_1,\ldots,a_n\}$ on the source manifold $A$ with $\dim A=n$ and $\{y_1,\ldots,y_p\}$ are local coordinates on near $b=f(a)=\{b_1,\ldots,b_p\}$ the target manifold $P$ with $\dim P=p$ than coordinates of the corresponding $m$-jet are
\begin{equation}\label{eq:jet_coordinates}
	j^m_a f = \left<\{a_1,\ldots,a_n\};\{b_1,\ldots,b_p\};\left\{\frac{\partial f}{\partial x}\bigg|_a\right\};\ldots;\left\{\frac{\partial^m f}{\partial x^m}\bigg|_a\right\}\right>
\end{equation}
where $\left\{\frac{\partial^m f}{\partial x^m}\Big|_a\right\}$ denotes all partial derivatives of order $m$ at point $a$. For any $C^m$ map $f: A \to P$ there exists a map called an $m$-jet extension of $f$:

\begin{equation}
	A \to J^m(A,P), \ x \mapsto j^m_x f.
\end{equation}

\begin{theorem} [Thom's jet transversality theorem]
	Let $W$ be a submanifold in the jet space $J^m(A,P)$ of codimension $c=\mathrm{codim}\,W$ and $n = \dim A$. Then the set of maps $f: A \to P$ whose $m-jet$ extension is transversal to $W$ is residual in $C^q(A,P)$ for $q > max(0,n-c)$.
\end{theorem}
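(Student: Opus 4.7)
The plan is to reduce the infinite-dimensional statement to the finite-dimensional transversality theorem stated just above, via the standard openness plus density argument. First I would exhaust $A$ by a countable increasing family of compact sets $K_1 \subset K_2 \subset \cdots$ and define, for each $i$, the set $\mathcal{T}_i \subset C^q(A,P)$ of maps $f$ whose jet extension $j^m f$ is transversal to $W$ at every point of $K_i$. Since transversality is an open condition on compact sets in the $C^m$-topology (and hence in the $C^q$-topology for $q\ge m$), each $\mathcal{T}_i$ is open. The desired residual set is then $\bigcap_i \mathcal{T}_i$, so the whole problem reduces to showing that each $\mathcal{T}_i$ is dense.

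For density I would argue locally. Fix $f_0 \in C^q(A,P)$, a compact set $K$, and cover $K$ by finitely many pairs of coordinate charts $(U_\alpha,\varphi_\alpha)$ on $A$ and $(V_\alpha,\psi_\alpha)$ on $P$ with $f_0(\overline{U_\alpha})\subset V_\alpha$, and smaller relatively compact $U'_\alpha \Subset U_\alpha$ still covering $K$. In one such chart, identify everything with open sets in $\mathbb{R}^n$ and $\mathbb{R}^p$ and consider the finite-dimensional space $\mathcal{P}_m$ of vector-valued polynomials of degree $\le m$ on $\mathbb{R}^n$, with coefficients $a \in \mathbb{R}^N$ where $N = p\binom{n+m}{m}$. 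Let $\chi_\alpha$ be a bump function equal to $1$ on $U'_\alpha$ and supported in $U_\alpha$, and form the perturbation
\begin{equation}
  F: U_\alpha \times \mathbb{R}^N \to \mathbb{R}^p, \qquad F(x,a) = f_0(x) + \chi_\alpha(x)\, p_a(x).
\end{equation}
The associated jet map $\Phi(x,a) = j^m_x F(\cdot,a)$ sends $U'_\alpha \times \mathbb{R}^N$ into $J^m(U_\alpha,V_\alpha)$, and a direct computation shows that on $U'_\alpha$ (where $\chi_\alpha\equiv 1$) the differential $\partial\Phi/\partial a$ surjects onto the fibre of $J^m$ over the source point — because the coefficients of $p_a$ realize every possible value of $(f,Df,\ldots,D^m f)$ at any prescribed $x$. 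Hence $\Phi$ is a submersion on $U'_\alpha\times \mathbb{R}^N$, so it is transversal to $W$, and $\Phi^{-1}(W)$ is a submanifold of codimension $c$. Applying the finite-dimensional transversality theorem (stated earlier in the paper) to the projection $\Phi^{-1}(W) \to \mathbb{R}^N$, or equivalently Sard's theorem, yields a set of full measure of parameters $a\in\mathbb{R}^N$ — in particular arbitrarily small ones — for which $x \mapsto \Phi(x,a) = j^m_x F(\cdot,a)$ is transversal to $W$ on $U'_\alpha$.

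Finally I would combine the local perturbations: process the finite cover $\{U'_\alpha\}$ one chart at a time, each time choosing the perturbation so small (in $C^q$-norm) that it neither destroys transversality already achieved on the previously handled charts nor pushes the map out of the product chart structure. This is possible because transversality on a compact set is open. The result is a map $f$ arbitrarily $C^q$-close to $f_0$ with $j^m f \pitchfork W$ on all of $K$, so $\mathcal{T}_i$ is dense. Taking the intersection over $i$ gives the required residual set. The condition $q > \max(0,n-c)$ enters in the usual way when invoking the underlying finite-dimensional theorem to the submanifold $\Phi^{-1}(W)$: one needs enough smoothness for Sard's theorem to apply to the projection to the parameter space.

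The main technical obstacle I expect is the \emph{surjectivity of $\partial\Phi/\partial a$} together with the verification that the cut-off by $\chi_\alpha$ does not spoil it on $U'_\alpha$; this is where the choice of polynomial perturbations of degree exactly $m$ is essential, since lower-order polynomials cannot hit all jet coordinates independently. A secondary, more bookkeeping obstacle is the inductive chart-by-chart patching, which must be arranged so that transversality gained in earlier charts persists after later perturbations; this is purely a matter of choosing successive perturbations small enough in $C^q$-norm and uses the openness of transversality on compact sets in an essential way.
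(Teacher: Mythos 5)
The paper does not prove this statement at all: it is quoted as classical background (Thom's theorem), and the only version the authors actually invoke with a reference is the multijet variant of Francis, Theorem \ref{thm:multijet_transversality}. So there is no proof in the paper to compare yours against; I can only assess your argument on its own terms.

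Your sketch is the standard proof (openness over an exhaustion by compacta, plus density via polynomial perturbations and parametric transversality / Sard), and its core is sound: for each fixed $x$ the linear map $a\mapsto\bigl(p_a(x),Dp_a(x),\ldots,D^mp_a(x)\bigr)$ is onto the jet fibre, so $\Phi$ is a submersion on $U'_\alpha\times\mathbb{R}^N$, and the chart-by-chart patching works by stability of transversality over compact sets. Two points need care, one of them substantive. First, openness of the condition ``$j^mf\pitchfork W$ on $K_i$'' involves $D(j^mf)$, hence the $(m+1)$-st derivatives of $f$; it is open in the $C^{m+1}$ topology, not the $C^m$ one as you wrote (and if $W$ is not closed you must exhaust $W$ by compacta as well). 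Second, your Sard step is applied to the projection $\Phi^{-1}(W)\to\mathbb{R}^N$, where $\Phi^{-1}(W)$ is only a $C^{q-m}$ manifold of dimension $n+N-c$ because $\Phi$ involves the $m$-th derivatives of $f_0$; Sard therefore requires $q-m>\max(0,n-c)$, not $q>\max(0,n-c)$. That is the bound your argument actually delivers, and it matches the multijet theorem stated later in the paper ($q-m>\max(0,rn-c)$ with $r=1$); the bound printed in the statement you were given appears to have dropped the $m$, and your method does not establish it as literally written. You should either strengthen the hypothesis to $q-m>\max(0,n-c)$ or explain how to dispense with the lost $m$ derivatives, which the proposed proof does not do.
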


Sometimes we need to study the properties of higher derivatives of $f$ at two or more (say, $r$) points simultaneously. For this we need to consider $r$-multijets of $f$ that is, the tuples of jets of $f$ at $r$ different points $a^1,\ldots,a^r$.

\begin{definition}
A tuple $( j^m_{a^1}f,...,j^m_{a^r}f )$ for pairwise distinct $a^i$ is called an $r$-multi-jet of $f$. The space of all such multijets is a subset of $(J^m(A,P))^r$.
\end{definition}

We use the original notation by Mather who first introduced multijets and denote this space by $\prescript{}{r}J^m(A,P)$. We denote by $\pi_i$ a projection of a point in $\prescript{}{r}J^m(A,P)$ to the manifold $A$ in $i$-th component.
\begin{equation}\label{eq:projection}
	\pi_i: ( j^m_{a^1}f,...,j^m_{a^r}f ) \mapsto a^i
\end{equation}

Mather's multijet transversality theorem deals with $C^\infty$ mappings but we need the same theorem in finite smoothness. Here we use this theorem from Francis' article.

\begin{theorem}[Multijet transversality: Francis version  \cite{FrancisBott1972}]\label{thm:multijet_transversality}
	Let $W$ be a submanifold in the space of $r$-multi-$m$-jets $\prescript{}{r}J^m(A,P)$ of codimension $c$ and $q-m>\max (0, rn-c)$, where $n = \dim A$ and $\dim P < \infty$. Then the set of $f\in C^q(A,P)$ for which $\prescript{}{r}j^m
	 f \pitchfork W$ is residual.
\end{theorem}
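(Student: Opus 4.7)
The plan is to reduce the multijet case to a localized version of Thom's jet transversality theorem (cited just above) combined with a countable covering argument and finite-dimensional parametric perturbations. Write $A^{(r)}$ for the configuration space $\{(a^1,\ldots,a^r) \in A^r : a^i \neq a^j \text{ for } i \neq j\}$, so that $\prescript{}{r}J^m(A,P)$ is an open submanifold of $(J^m(A,P))^r$ fibred over $A^{(r)}$, and $\prescript{}{r}j^m f$ is a $C^{q-m}$ map from $A^{(r)}$, of dimension $rn$, into $\prescript{}{r}J^m(A,P)$. The goal is to show that
\[
T_W := \{f \in C^q(A,P) : \prescript{}{r}j^m f \pitchfork W\}
\]
is a countable intersection of open dense sets.

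First I would exhaust $A^{(r)}$ by a countable family of compact ``separated'' blocks of the form $K = K_1 \times \cdots \times K_r$, where the $K_i$ are disjoint compact sets contained in coordinate charts $(U_i,\varphi_i)$ whose closures are pairwise disjoint. Any ordered tuple of distinct points of $A$ lies in such a block, and countably many of them cover $A^{(r)}$. For each fixed block $K$ the condition that $\prescript{}{r}j^m f$ be transversal to $W$ at every point of $K$ is open in the $C^q$ topology, by compactness plus the usual openness of transversality to a closed piece of $W$ lying over a neighbourhood of $K$. Hence $T_W$ is $G_\delta$, and it suffices to establish density on each block.

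For density I would perform independent local perturbations in the $r$ disjoint neighbourhoods of $K_1,\ldots,K_r$. Choose bump functions $\chi_i$ equal to $1$ on $K_i$ and supported in $U_i$, and put
\[
f_\lambda(x) = f(x) + \sum_{i=1}^{r} \chi_i(x)\, P_{\lambda_i}(\varphi_i(x)),
\]
where $P_{\lambda_i}$ runs over polynomial perturbations of degree $\le m$ with coefficient vector $\lambda_i \in \mathbb{R}^N$, with $N$ large enough that the evaluation onto the $m$-jet fibre at any point of $K_i$ is onto. The associated evaluation
\[
\Phi : K \times \mathbb{R}^{rN} \to \prescript{}{r}J^m(A,P),\qquad (a,\lambda) \mapsto \prescript{}{r}j^m f_\lambda (a),
\]
is then a submersion, because the disjointness of the supports makes the $r$ perturbations act independently slot by slot. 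Consequently $\Phi \pitchfork W$, and by the classical parametric transversality theorem (Sard applied in finite dimension) the slice $a \mapsto \Phi(a,\lambda)$ is transversal to $W$ on $K$ for almost every $\lambda$. The hypothesis $q - m > \max(0, rn - c)$ is exactly what is needed for Sard in this finite-dimensional family, since the jet extension is of class $C^{q-m}$ and the source dimension is $rn$. Taking $\lambda$ arbitrarily small approximates $f$ by an element of $T_W$ on $K$.

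The hard part, I expect, is the bookkeeping: producing a countable cover of $A^{(r)}$ by separated blocks while keeping openness uniform on each one, arranging the bump supports so that the $r$ perturbations remain genuinely independent for blocks that approach the fat diagonal (which is excluded from the configuration space but to which blocks may accumulate), and verifying that transversality gained on one block is not destroyed when enforcing it on another. A secondary, purely arithmetic point is tracking the loss of smoothness in passing from $f$ to $\prescript{}{r}j^m f$: since the extension is only $C^{q-m}$, one needs $q - m > rn - c$ for Sard and $q - m > 0$ even to speak of transversality, which together give the hypothesis in the statement.
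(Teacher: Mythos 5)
The paper does not prove this statement: it is imported verbatim from Francis \cite{FrancisBott1972} as a black box (the finite-smoothness analogue of Mather's multijet transversality theorem), so there is no internal proof to compare yours against. Your outline is, in substance, the standard argument by which this theorem is actually established in the literature (Mather's scheme as in Golubitsky--Guillemin, adapted to finite smoothness via the Abraham--Robbin/Hirsch parametric transversality theorem): exhaust the configuration space $A^{(r)}$ by countably many products of pairwise disjoint compact chart pieces, get openness on each block, and get density by independent polynomial perturbations supported near each factor, with Sard supplying the arithmetic condition $q-m>\max(0,rn-c)$ because the multijet extension is only $C^{q-m}$ and the source has dimension $rn$. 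That is the right proof and the right accounting of where each hypothesis is used.

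Two points in your sketch deserve tightening. First, the additive perturbation $f_\lambda(x)=f(x)+\sum_i\chi_i(x)P_{\lambda_i}(\varphi_i(x))$ only makes sense after further refining the cover so that each $f(K_i)$ lands in a single coordinate chart of the target $P$; this is routine but must be said, since $P$ is a manifold. Second, openness of transversality along a compact block is only automatic relative to a closed (or compact) piece of $W$; for a general submanifold $W$ one writes it as a countable union of compact pieces and obtains a residual set rather than a single open dense one --- which is exactly, and only, what the theorem claims. Your final worry about transversality on one block being ``destroyed'' on another is not a real issue: the conclusion is a countable intersection of open dense sets, so no single perturbation need work on all blocks simultaneously; Baire category does that bookkeeping for you.
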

In our case of families of vector fields the source is $B^k\times M^2$ and the target is $\mathbb R^2$. We denote by $\varepsilon\in I^k$ the parameters of our family and by $x\in M^2$ the phase coordinates. Thus an $r$-multi-$m$-jets of families of vector fields have coordinates
\begin{equation}\label{eq:family_jet_coordinates}
\begin{multlined}
	\left<\left\{\varepsilon^1,x^1,V\left(\varepsilon^1,x^1\right),D^1V\left(\varepsilon^1,x^1\right),\ldots,D^m V\left(\varepsilon^1,x^1\right)\right\},\ldots,\right.\\
	\left.\left\{\varepsilon^r,x^r,V\left(\varepsilon^r,x^r\right),D^1V\left(\varepsilon^r,x^r\right),\ldots,D^m V\left(\varepsilon^r,x^r\right)\right\}\right>
\end{multlined}
\end{equation}
where $D^m V$ denotes the set of all partial derivatives of order $m$ in all variables.

We are interested in vector fields that occur in generic families so we consider the source points  for the same parameter value. In order to make difference between the parameters and the phase variables, we write $v_\varepsilon(x) = V(\varepsilon,x)$. We also need a special notation to write conditions on a jet of a vector field in phase coordinates only. A \textit{truncated} $m$-jet of a family $V = \{v_\varepsilon(x)\}$ is a part of a full jet with the derivatives in the phase coordinates only and without the source coordinate. We denote it by $j^m_*(v_\varepsilon)(x)$ and in the corresponding space its coordinates are
\begin{equation}\label{eq:truncated_jet}
	\left<v_\varepsilon(x),\frac{\partial v_\varepsilon}{\partial x},\ldots,\frac{\partial^m v_\varepsilon}{\partial x^m}\right>
\end{equation}

By abuse of notation we use $j^m_*\left(v_\varepsilon\right)(x)$ also for Taylor expansion of a vector field $v_\varepsilon(x)$ centered at $x$.

\subsection{Normal forms of jets}

In what follows, we characterize the properties of the degenerate singular points by the properties of their normal forms. A general relation between the sets of normal forms and the corresponding vector fields is given by a theorem that makes use of the following definition.

\begin{definition}\label{def:equiv}
	Two $m$-jets of vector fields, $\alpha$ and $\beta$ in $(\mathbb{R}^n, 0)$ are equivalent provided that there exists an $m$-jet $h: (\mathbb{R}^n, 0) \to (\mathbb{R}^n, 0)$ of a diffeomorphism that brings one of these jets into another:
	\begin{equation}
		h_* \alpha = \beta.
	\end{equation}
\end{definition}

We denote the space of $m$-jets of a vector field at zero in $\mathbb{R}^n$  by $J^m(n)$ or simply by $J^m$ where the dimension $n$ is clear.

\begin{theorem}[Takens~\cite{Takens1974} form of Thom's theorem in~\cite{Levine1971}]\label{thm:tak}
	Let $K$ be a semi-algebraic subset of $J^m(n)$. Then the set of all jets of vector fields equivalent to some jet in $K$ is a semi-algebraic subset of $J^m(n)$ again.
\end{theorem}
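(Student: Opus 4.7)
The plan is to realize the saturation of $K$ under the equivalence relation as the image of a semi-algebraic set under a semi-algebraic map, and then to apply the Tarski--Seidenberg theorem.

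First I would introduce the group $G^m$ of $m$-jets at the origin of local diffeomorphisms $h:(\mathbb{R}^n,0)\to(\mathbb{R}^n,0)$. As a set, $G^m$ is parametrized by the coefficients of the Taylor polynomial of $h$ of degree $\leq m$ (with zero constant term), subject only to the open condition $\det Dh(0)\neq 0$; so it is a Zariski open, hence semi-algebraic, subset of an affine space. Composition of $m$-jets is truncated polynomial composition, and inversion $h\mapsto h^{-1}$ is a rational map whose denominator is a power of $\det Dh(0)$; thus $G^m$ is an algebraic group acting on $J^m(n)$ by the pushforward $h\cdot\alpha := h_*\alpha$.

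Next I would verify that the action map
\begin{equation}
	\Phi:\ G^m \times J^m(n)\ \to\ J^m(n),\qquad (h,\alpha)\mapsto h_*\alpha,
\end{equation}
is semi-algebraic. For a representative $v=\sum v_i\,\partial/\partial x_i$ of $\alpha$, the pushforward is given by $(h_*v)(y)=Dh(h^{-1}(y))\,v(h^{-1}(y))$; after truncation at order $m$ the coefficients of $h_*v$ are polynomial in the coefficients of $v$ and in the coefficients of both $h$ and $h^{-1}$. Since the latter are rational in the former with denominator a power of $\det Dh(0)$, the map $\Phi$ is rational, everywhere regular on $G^m\times J^m(n)$, and in particular semi-algebraic.

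Finally, the saturation of $K$ is precisely
\begin{equation}
	\tilde K\ =\ \{\beta\in J^m(n): \exists h\in G^m,\ \exists\alpha\in K,\ h_*\alpha=\beta\}\ =\ \Phi(G^m\times K).
\end{equation}
Since $G^m\times K$ is semi-algebraic and $\Phi$ is a semi-algebraic map, the Tarski--Seidenberg theorem yields that $\tilde K$ is semi-algebraic, which is the desired conclusion. The only slightly delicate point, and the one I expect to be the main obstacle, is the verification in the second step: one must check that the pushforward formula, after being truncated at order $m$, really does descend to a rational expression in the jet coordinates. This is a bookkeeping exercise on truncated Taylor series; it can also be bypassed by the structural observation that composition and inversion of truncated jets are themselves algebraic operations on $G^m$, so that the induced action on $J^m(n)$ is automatically algebraic.
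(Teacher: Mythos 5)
Your argument is correct, and it is the standard proof of this statement: realize the saturation of $K$ as the image $\Phi(G^m\times K)$ of a semi-algebraic set under the semi-algebraic (indeed regular rational) action map, then invoke Tarski--Seidenberg. The paper itself gives no proof here --- Theorem~\ref{thm:tak} is quoted from Takens and from Thom's lectures as recorded by Levine --- so there is nothing in the text to compare against; your write-up supplies essentially the argument found in those sources. The one bookkeeping point you flag is real but harmless: since $h_*v=(Dh\cdot v)\circ h^{-1}$ involves a first derivative of $h$, the $m$-jet of $h_*v$ at $0$ is controlled by the $m$-jet of $h$ only because the jets in $J^m(n)$ are taken at a \emph{singular} point (so $v$ vanishes at the origin and the loss of one order in $Dh$ is compensated); for non-vanishing vector fields one would let the group of $(m+1)$-jets of diffeomorphisms act instead, and the Tarski--Seidenberg argument goes through verbatim in either case.
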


In what follows we use an improvement of this theorem for some particular cases.

\section{Singular Points in Generic $K$-Parameter Families} \label{sec:kpar}
We start with the exact formulation of the theorem~\ref{thm:k_non_hyp_points}.

\begin{theorem*}[Non-hyperbolic singular points in $k$-parameter families]
	There exists a residual subset of $k$-parameter families of $C^2$ vector fields on two-manifold $M^2$ such that any vector field that occur in a family from this set have no more than $k$ non-hyperbolic singular points.
\end{theorem*}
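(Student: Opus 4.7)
The plan is to apply the multijet transversality theorem (Theorem \ref{thm:multijet_transversality}) with $r=k+1$ and $m=1$, using a bad locus $W$ in the multi-$1$-jet space that encodes ``$k+1$ non-hyperbolic singular points at a single parameter value'', and showing by a codimension count that generic families miss $W$ entirely. First I would fix a finite atlas $\{(U_\alpha,\varphi_\alpha)\}$ on $M^2$ in which a $k$-parameter family reads, in each chart, as a $C^q$ map $V_\alpha\colon B^k\times D_\alpha\to\mathbb{R}^2$; for each ordered $(k+1)$-tuple of chart indices (finitely many) I would carry out the construction below, and then intersect the finitely many resulting residual sets. Any hypothetical $k+1$ distinct singular points of a $v_\varepsilon$ lie in some such chart tuple, so this reduces the problem to the local setting of the theorem.

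In $\prescript{}{k+1}J^1(B^k\times\mathbb{R}^2,\mathbb{R}^2)$ I would define $W$ as the set of tuples $\bigl((\varepsilon^i,x^i;V^i,DV^i)\bigr)_{i=1}^{k+1}$ satisfying (a) $\varepsilon^1=\cdots=\varepsilon^{k+1}$, (b) $V^i=0$ for all $i$, and (c) the $2\times 2$ phase Jacobian $\partial V^i/\partial x$ is non-hyperbolic for all $i$. Conditions (a) and (b) are linear; condition (c) is the semi-algebraic set $\{\det=0\}\cup\{\mathrm{tr}=0,\ \det\ge 0\}$ on each Jacobian, which stratifies into the smooth semi-algebraic pieces $\{\det=0,\mathrm{tr}\neq 0\}$, $\{\mathrm{tr}=0,\det>0\}$, $\{\det=\mathrm{tr}=0\}$, each of codimension at least $1$ in the $4$-dimensional space of $2\times 2$ matrices. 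Hence $W$ is a finite union of smooth submanifolds, and the codimension of every stratum is at least
\[
c=\underbrace{k\cdot k}_{\text{(a)}}+\underbrace{2(k+1)}_{\text{(b)}}+\underbrace{(k+1)}_{\text{(c)}}=k^2+3k+3,
\]
whereas the source dimension of the multijet extension is $rn=(k+1)(k+2)=k^2+3k+2<c$.

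With $q\ge 2$ one has $q-m\ge 1>\max(0,rn-c)=0$, so Theorem \ref{thm:multijet_transversality} applies stratum-by-stratum: the set of families whose multi-$1$-jet extension is transversal to a given stratum $S$ is residual, and since $rn<\operatorname{codim} S$, transversality forces disjointness. Intersecting over the finitely many strata and the finitely many chart tuples produces the desired residual set of families: no vector field from such a family can carry $k+1$ non-hyperbolic singular points, for otherwise the multijet at the common parameter value would have to land in some stratum of $W$.

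The only nontrivial step is the stratification of $W$: Theorem \ref{thm:multijet_transversality} is stated for a smooth submanifold, whereas non-hyperbolicity is only semi-algebraic, so one must verify that each of its finitely many strata still has codimension at least $1$ in the Jacobian factor and check that the combined count exceeds $rn$. This is routine linear algebra on the $2\times 2$ Jacobian, and the count comes out with a single codimension to spare, which is precisely the rigorous incarnation of the heuristic principle that codimension-$(k+1)$ phenomena should not occur in generic $k$-parameter families.
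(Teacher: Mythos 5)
Your proposal is correct and follows essentially the same route as the paper: the multijet transversality theorem with $r=k+1$, $m=1$, the same bad locus $W$ (same parameter-equality, zero-field, and non-hyperbolicity conditions), and the identical codimension count $c=k^2+3k+3$ against source dimension $rn=k^2+3k+2$. The only cosmetic differences are that you stratify the non-hyperbolicity locus explicitly where the paper encodes it as the single equation $\det\cdot\mathrm{tr}=0$ and treats $W$ as a semi-algebraic stratified set, and that you reduce to charts via finitely many chart tuples where the paper covers $W$ by countably many compacts with pairwise disjoint projections.
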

\begin{proof}[of Theorem~\ref{thm:k_non_hyp_points}]
	Consider a set of vector fields with at least $(k+1)$ non-hyperbolic singular points. We need to prove that there exists a residual subset of $k$-parameter families of vector fields that does not intersect this set. Let a vector field of a family $v_\varepsilon(x)$ for some parameter value $\varepsilon^1$ have $(k+1)$ non-hyperbolic singular points $x^1,\ldots,x^{k+1}$. Its $(k+1)$-multi-1-jet at these points is given by the equations
	\begin{equation}\label{eq:deg_mfd}
		W =
		\begin{cases}
			\varepsilon^1 = \varepsilon^2 = \dots = \varepsilon^{k+1};\\
			v_{\varepsilon^i}\left(x^i\right) = 0 \text{ for } i=1,\ldots,(k+1);\\
			\det \left(\frac{\partial v_{\varepsilon^i}}{\partial x}\left(x^i\right)\right)\cdot \mathrm{tr} \left(\frac{\partial v_{\varepsilon^i}}{\partial x}\left(x^i\right)\right) = 0 \text{ for } i=1,\ldots,(k+1);
		\end{cases}
	\end{equation}
	The first line in~\eqref{eq:deg_mfd} assures that we are writing all conditions for the same parameter value and thus for the same vector field of a family. The set $W$ is semi-algebraic and thus a stratified manifold that corresponds to a vector fields of $V$ for a parameter $\varepsilon^1$ with $(k+1)$ non-hyperbolic singular points. The first line of~\eqref{eq:deg_mfd} contains $k^2$ equations, the second line contains $2(k+1)$ equations and the last line contains $(k+1)$ equations. Thus the codimension of $W$ is $c=k^2 + 3(k+1)$.
	
	Now in order to use the multijet transversality theorem we need to pass to a vector fields defined on one chart of $M^2$. Since $W$ is a submanifold of a jet space (jet bundle over $M^2$) it can be covered by at most countable number of compact sets $K_j$ such that each projection $\pi_i(K_j)$ defined in \eqref{eq:projection}) belongs to one coordinate neighborhood $M^2$ and all these neighborhoods are pairwise disjoint. Thus we can now consider vector fields on a Cartesian product of discs  which are maps to $\mathbb{R}^2$ as explained in section~\ref{sec:intro}.
	
	Let us check the assumptions of the multijet transversality theorem~\ref{thm:multijet_transversality}. In our case $r=k+1$, $n=k+2$, $q=2$, $m=1$ and $rn-c = (k+1)(k+2)-(k^2 + 3(k+1)) = -1$. Since $q-m>0=\max (0, rn-c)$ the set of families of vector fields $V$ such that $\prescript{}{k+1}j^1V\pitchfork W\big|_{K_i}$ is residual. Note that $rn$ is the dimension of a source space for $r$-multi-$m$-jets and since $rn<c=\mathrm{codim}\,W$ the transversality condition means no intersection with $W\big|_{K_i}$. This means that there is a residual set of families of vector fields such that they have no intersection with $W\big|_{K_i}$. Taking a countable intersection for all $K_i$ we obtain a residual set of families that do not intersect the whole $W$.
\end{proof}

\begin{proposition}
	Theorem~\ref{thm:k_non_hyp_points} is true for any compact $N$-dimensional manifold.
\end{proposition}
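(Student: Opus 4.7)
The plan is to reuse the argument of Theorem~\ref{thm:k_non_hyp_points} essentially verbatim, with only two places altered: the number of scalar equations contributed by $v_{\varepsilon^i}(x^i)=0$, and the algebraic description of non-hyperbolicity of the linearization. Everything else---the reduction to a chart, the appeal to Theorem~\ref{thm:tak} to stratify the defining semi-algebraic set, the application of Theorem~\ref{thm:multijet_transversality} stratum by stratum, and the countable covering argument---carries over without change.

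For the non-hyperbolicity condition, I would observe that a real $N\times N$ matrix $A$ has an eigenvalue on the imaginary axis if and only if its characteristic polynomial $p_A(\lambda)$ and $p_A(-\lambda)$ share a root, which is equivalent to the single polynomial equation
\[
\mathrm{Res}_\lambda\bigl(p_A(\lambda),\,p_A(-\lambda)\bigr)=0
\]
in the entries of $A$; equivalently, one may use vanishing of an appropriate Hurwitz determinant. Either way, the analogue of the defining set $W$ in~\eqref{eq:deg_mfd} is cut out by polynomial conditions and hence is semi-algebraic of codimension one in the linearization block. Theorem~\ref{thm:tak} therefore supplies a finite stratification by smooth submanifolds, on each of which the multijet transversality theorem will be applied.

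Next I would redo the codimension bookkeeping. With $r=k+1$, $n=\dim(B^k\times M^N)=k+N$, $m=1$, and $q=2$, the three blocks of defining equations---parameter coincidence, vanishing of the field, and non-hyperbolicity---contribute $k^2$, $N(k+1)$ and $(k+1)$ equations respectively, so
\[
c = k^2 + (N+1)(k+1).
\]
A direct expansion then gives
\[
rn - c = (k+1)(k+N) - k^2 - (N+1)(k+1) = -1,
\]
so $\max(0,rn-c)=0 < q-m$ and Theorem~\ref{thm:multijet_transversality} applies to each stratum. Since $rn < c$, transversality forces non-intersection with each compact piece $K_j$ covering $W$; a countable intersection of the resulting residual sets yields the required residual family.

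The only step that requires thought beyond the $N=2$ case is verifying that non-hyperbolicity remains cut out by a \emph{single} polynomial equation, but the resultant description makes this immediate. The combinatorial identity $rn-c=-1$ persists in every dimension, so no genuine obstacle appears and the smoothness requirement $q\ge 2$ from the original theorem is preserved.
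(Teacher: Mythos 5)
Your proposal is correct and follows essentially the same route as the paper: the paper likewise reduces everything to replacing the trace condition by a single resultant-type polynomial equation (it uses $\mathrm{Res}_\mu\bigl(\Re P(i\mu),\,\Im P(i\mu)\bigr)$ multiplied by the determinant, where $P$ is the characteristic polynomial, rather than your $\mathrm{Res}_\lambda\bigl(p_A(\lambda),\,p_A(-\lambda)\bigr)$) and then reruns the same codimension count with $2$ replaced by $N$. One cosmetic caveat: your ``if and only if'' is really only the forward containment (a matrix with eigenvalues $1$ and $-1$ also kills that resultant), but since the argument only needs the non-hyperbolic locus to be \emph{contained} in a proper semi-algebraic set cut out by one polynomial, this does not affect the proof.
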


\begin{proof}
	Indeed, the only difference for higher dimension will be the last line of~\eqref{eq:deg_mfd}: instead of a trace we need another condition for Jacobian matrix to have purely imaginary eigenvalue. Thus we just need a polynomial condition for a matrix to have at least one imaginary eigenvalue.
	
	Let $P(\lambda)$ be a characteristic polynomial of a matrix $A$. Let $P_1(\mu)$ and $P_2(\mu)$ be real and imaginary parts of $P(i\mu)$ assuming $\mu\in\mathbb{R}$. If $P(\lambda)$ has a root $ib,\, b\in\mathbb{R}$ then both real and imaginary parts of $P(ib)$ are zero. Thus $b$ is itself a root of $P_1(\mu)$ and $P_2(\mu)$. Two polynomials have a common root iff their resultant is zero. Let's denote it by $R(A)$. Since it is a polynomial of the coefficients of $P_1$ and $P_2$ it is also a polynomial of elements of the matrix $A$.
	
	Now the last line of~\eqref{eq:deg_mfd} should be the following:
	\begin{equation}
		\det \left(\frac{\partial v_{\varepsilon^i}}{\partial x}\left(x^i\right)\right)\cdot R\left(\frac{\partial v_{\varepsilon^i}}{\partial x}\left(x^i\right)\right) = 0 \text{ for } i=1,\ldots,(k+1);
	\end{equation}
	and all the calculations are the same as in theorem~\ref{thm:k_non_hyp_points} with the change of the dimension from 2 to $N$.
\end{proof}

There is a corollary of theorem~\ref{thm:k_non_hyp_points} concerning multiplicities of singular points for the analytic case. Gabrielov and Khovanski in \cite[corollary after Theorem~7]{GabrielovKhovanskii1998} found an upper bound for the multiplicity of zeroes of a  map $\mathbb{C}^N \to \mathbb{C}^N$ that may occur in a generic analytic $k$-parameter family. For $N = 2$ their formula may be simplified.

\begin{theorem}[Gabrielov and Khovanski~\cite{GabrielovKhovanskii1998}]\label{thm:GKh}
	Let $P_z(x) = (P_{z,1}(x), P_{z,2}(x))$ be a generic family of analytic mappings $\mathbb{C}^2 \to \mathbb{C}^2$ depending on $k$ parameters $z \in \mathbb{C}^k$. Then the multiplicity of $P_z$ at any point $x \in \mathbb{C}^2$ and for any $z$ is less than $\frac{2}{3\sqrt{3}}(k+2)^\frac{3}{2}$.
\end{theorem}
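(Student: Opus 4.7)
The plan is to derive this statement as the $N=2$ specialization of the general Gabrielov--Khovanski multiplicity bound \cite[Theorem~7]{GabrielovKhovanskii1998}. Their general theorem bounds, for a generic analytic $k$-parameter family of maps $\mathbb{C}^N\to\mathbb{C}^N$, the multiplicity of any isolated zero in terms of the codimension of the corresponding multiplicity stratum in jet space. The mechanism behind that bound is the analytic counterpart of the multijet transversality argument used in the proof of Theorem~\ref{thm:k_non_hyp_points}: a generic $k$-parameter analytic family avoids every stratum of codimension strictly greater than $k+N$, so the maximal realizable multiplicity $\mu$ is the largest one for which the stratum \textquotedblleft multiplicity $\geq \mu$\textquotedblright\ has codimension at most $k+N$ in the ambient jet bundle.

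The second step is to substitute $N=2$ into the resulting codimension formula and solve the ensuing inequality in $\mu$ explicitly. The Gabrielov--Khovanski estimate expresses the codimension of the multiplicity-$\mu$ stratum as an elementary function of $\mu$ and $N$; when $N=2$ this function collapses to a single algebraic inequality, whose unique monotone solution is $\mu < \tfrac{2}{3\sqrt{3}}(k+2)^{3/2}$. The exponent $\tfrac{3}{2}$ reflects that the codimension for $N=2$ scales like $\mu^{2/3}$, and the constant $\tfrac{2}{3\sqrt{3}}=\tfrac{2}{3^{3/2}}$ is the characteristic value produced by a one-parameter Lagrange-type optimization, of the same flavour as $\max_{0\le t\le 1} t(1-t)^2 = \tfrac{4}{27}$.

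The main obstacle is extracting the exact form of the Gabrielov--Khovanski codimension function from their paper and checking that the $N=2$ reduction matches the claimed constants. Once the general estimate is in a usable closed form, its specialization and the isolation of $\mu$ from the inequality are routine algebraic steps, and no additional transversality machinery is needed beyond what is already quoted from \cite{GabrielovKhovanskii1998}.
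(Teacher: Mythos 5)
The paper offers no proof of this statement at all: it is imported directly from Gabrielov--Khovanskii \cite[corollary after Theorem~7]{GabrielovKhovanskii1998}, with only the remark that their general formula ``may be simplified'' for $N=2$, which is precisely the route you propose (quote the general codimension/multiplicity bound, specialize to $N=2$, and solve for $\mu$). Your proposal therefore matches the paper's approach; the only substantive content in either case is the extraction and algebraic simplification of the Gabrielov--Khovanskii codimension function at $N=2$, which neither you nor the paper carries out explicitly.
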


Thus we have a corollary from Theorems \ref{thm:k_non_hyp_points} and \ref{thm:GKh}.

\begin{corollary}\label{cor:analytic_finite_multiplicity}
	Vector fields in generic $k$-parameter families of $C^\omega$ vector fields on $M^2$  have a finite sum of multiplicities of non-hyperbolic singular points. It is not greater than $\frac{2}{3\sqrt{3}}k(k+2)^\frac{3}{2}$.
\end{corollary}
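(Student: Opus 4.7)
The plan is to combine the two previous results by intersecting their residual sets. Let $\mathcal R_1$ denote the residual set of $C^\omega$ $k$-parameter families provided by Theorem~\ref{thm:k_non_hyp_points} (valid in the analytic category by the remark in the Introduction), whose vector fields have at most $k$ non-hyperbolic singular points. Let $\mathcal R_2$ denote the residual set of $C^\omega$ families whose complexified restriction to any chart satisfies the multiplicity bound of Theorem~\ref{thm:GKh}. Their intersection $\mathcal R_1 \cap \mathcal R_2$ is still residual, and any family in it will satisfy the corollary.

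Next, I would argue locally. Pick $V \in \mathcal R_1 \cap \mathcal R_2$ and a parameter value $\varepsilon$. By $\mathcal R_1$, the set of non-hyperbolic singular points of $v_\varepsilon$ is finite with at most $k$ elements; call them $x^1,\dots,x^s$ with $s\le k$. Cover each $x^i$ by a chart of $M^2$ and complexify: in this chart $v_\varepsilon$ becomes an analytic map germ $(\mathbb{C}^2,0) \to (\mathbb{C}^2,0)$ depending analytically on $\varepsilon \in \mathbb{C}^k$. The multiplicity of the singular point $x^i$, defined as the local intersection multiplicity of the two component functions of $v_\varepsilon$ at $x^i$, coincides with the Gabrielov--Khovanski multiplicity of the complexified map at $x^i$. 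By $\mathcal R_2$ and Theorem~\ref{thm:GKh}, this is bounded by $\tfrac{2}{3\sqrt{3}}(k+2)^{3/2}$. Summing over the at most $k$ non-hyperbolic singular points yields the bound $\tfrac{2}{3\sqrt{3}}k(k+2)^{3/2}$.

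The one nontrivial step is verifying that Theorem~\ref{thm:GKh} may be applied to our families, because it is stated for a single family $P_z$ of maps $\mathbb{C}^2 \to \mathbb{C}^2$, whereas our object is a family of vector fields on a compact manifold. I would handle this with the same covering argument used inside the proof of Theorem~\ref{thm:k_non_hyp_points}: choose a finite analytic atlas on $M^2$, in each chart complexify the family, and apply Theorem~\ref{thm:GKh} to obtain a residual condition on families in that chart; the finite intersection over charts is still residual and gives $\mathcal R_2$.

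The main obstacle, and the only place where one must be careful, is the identification of multiplicities across the real/complex passage and the verification that a generic real-analytic family, after local complexification, inherits the genericity hypothesis of Theorem~\ref{thm:GKh}. Once this identification is granted (it follows from the fact that complexification is a natural operation on analytic germs and that generic conditions on coefficients of analytic families pull back to generic conditions on the real family), the proof is just arithmetic: $s \le k$ points, each contributing at most $\tfrac{2}{3\sqrt{3}}(k+2)^{3/2}$ to the sum of multiplicities.
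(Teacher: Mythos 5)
Your proposal is correct and follows essentially the same route as the paper: intersect the residual set from Theorem~\ref{thm:k_non_hyp_points} (at most $k$ non-hyperbolic points) with the genericity hypothesis of Theorem~\ref{thm:GKh} (each point has multiplicity at most $\frac{2}{3\sqrt{3}}(k+2)^{3/2}$) and multiply. The paper's own proof is in fact terser than yours, simply noting that multiplicity is a local property so a vector field in a chart is a map $\mathbb{R}^2\to\mathbb{R}^2$ to which Theorem~\ref{thm:GKh} applies; your extra care about complexification and the chart covering is a reasonable elaboration of the same argument.
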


\begin{proof}
	From theorem~\ref{thm:k_non_hyp_points} it follows that vector fields in generic $k$-parameter families of analytic vector fields on two-manifold have no more than $k$ non-hyperbolic singular points. The multiplicity of a singular point for analytic vector field is a local property and in local chart a vector field may be seen as a map $\mathbb{R}^2\to\mathbb{R}^2$. Thus applying theorem~\ref{thm:GKh} we get the statement of the corollary.
\end{proof}

There are several ways to define multiplicity of a singular point for an arbitrary $C^q$ vector field but it seems that corollary~\ref{cor:analytic_finite_multiplicity} is true for $q$ sufficiently large.

\section{Singular Points in Generic Two-Parameter Families} \label{sec:twopar}

\subsection{Classification of linear parts}

Let's introduce the main classes of singular points we deal with. Our classification is close to that given by Dumortier \cite{Dumortier1977}. The space $Hom (\mathbb R^2, \mathbb R^2)$ of linear operators is a disjoint union:
\begin{equation}
Hom (\mathbb R^2, \mathbb R^2) = H^* \bigsqcup AH^* \bigsqcup SN^* \bigsqcup BT^* \bigsqcup \{0\}.
\end{equation}
Here

$H^*$ is the set of all the hyperbolic linear operators of the plane to itself,

$AH^*$  (from Andronov -- Hopf) is the set of all linear operators with two pure imaginary non-zero eigenvalues,

$SN^*$ (from Saddle-Node) is the set of all linear operators with one zero and one non-zero eigenvalue,

$BT^*$ (from Bogdanov -- Takens) is the set set of all non-zero nilpotent linear operators.

Denote by $H, AH, SN, BT,ZL$ (from zero linear) the sets of germs of vector fields on $(\mathbb R^2,0)$ with the linear part from $H^*, AH^*, SN^*, BT^*$ and with zero linear part respectively. We now define special subclasses of these classes that we will need below.

\subsection{Classes $AH_k$}

Germs of class $AH$, have a normal form with a $(2m+1)$-jet equal to
\begin{equation}\label{eq:ah}
j^{2m+1}v = z \left(i \omega + \sum_{k=1}^{m} a_k r^{2k}\right) \frac{\partial}{\partial z}, \omega \in \mathbb R,\ a_k \in \mathbb C,
\end{equation}
where $z=x+iy$ and $r=|z|$.

Denote by $AH^n_0$ ($n$ of normalized) the class of all jets~\eqref{eq:ah} with $\Re a_1 \not = 0$, by $AH^n_k$ the class of all jets~\eqref{eq:ah} with $ \Re a_1 = ... =\Re a_k = 0 \not = \Re a_{k+1}$ for $m \ge k+1$. Obviously, these sets  are semi-algebraic. Denote by $AH_k$ the set of $m$-jets of vector fields equivalent at the origin to some jet from $AH^n_k$.

\begin{remark}
	The class $AH_k$ depends on $m$ as a set of $(2m+1)$-jets. But we skip $m$ in notations to make them simpler.
\end{remark}

The class $AH^n_k$ has codimension $k$ in the set $AH^n_0$. This motivate a statement that germs of vector fields with $k$ first focus values equal to zero constitute a semi-algebraic set of codimension $k$ in $AH_0$, and of codimension $k+1$ in the set of all germs of vector fields at a singular point at the origin. This statement is formalized and proved below.

\subsection{Classes $SN_k$}

Germs of class $SN$ have a normal form with an $m$-jet equal to
\begin{equation}\label{eq:sn}
	j^mv =	\left(  \sum\limits_{k=2}^{m} a_k x^k\right)\frac{\partial}{\partial x} + y\left(\lambda + \sum\limits_{k=1}^{m-1} b_k x^k\right)\frac{\partial}{\partial y}, \ \lambda \not = 0.
\end{equation}

Denote by $SN^n_0$ ($n$ of normalized) the class of all $m$-jets ~\eqref{eq:sn} with $a_2 \not = 0$, by $SN^n_k$ the class of all $m$-jets~\eqref{eq:sn} with $a_2 = ...= a_{k+1}= 0 \not = a_{k+2}$.
Obviously, the sets $SN^n_k, $ are semi-algebraic. Denote by $SN_k$ the set of $m$-jets of vector fields equivalent at the origin to some jet from $SN^n_k$.

By the Takens Theorem~\ref{thm:tak}, the sets $AH_k, SN_k$ are semi-algebraic. But we are interested in their codimension.

\begin{theorem}\label{thm:codim}
	For any $m \ge 2k+3$ the classes $AH_k$ have codimension $k+1$ in $J^m$. For any $m \ge k + 1$ the classes $SN_k$ have codimension $k+1$ in $J^m$.
\end{theorem}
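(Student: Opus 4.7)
By Takens' theorem (Theorem~\ref{thm:tak}) the classes $AH_k$ and $SN_k$ are already known to be semi-algebraic subsets of $J^m$, so the whole task is to compute their dimension. The plan is to realize each class as the saturation, under the finite-dimensional group $G^m$ of $m$-jets of origin-preserving diffeomorphisms of $(\mathbb R^2,0)$, of an explicit semi-algebraic stratum inside the Poincar\'e--Dulac normal-form slice $N$ (the family~\eqref{eq:ah} or~\eqref{eq:sn} respectively). If $N$ is genuinely a slice to the $G^m$-orbits and $S\subset N$ is any subset, then
\[
\mathrm{codim}_{J^m}(G^m\cdot S)=\mathrm{codim}_{N}(S)+\mathrm{codim}_{J^m}(G^m\cdot N),
\]
and it suffices to compute the two codimensions on the right.

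\paragraph{Counting for $AH_k$ and $SN_k$.} Standard Poincar\'e--Dulac theory for a pure-imaginary linear part guarantees that~\eqref{eq:ah} is transversal to the $G^m$-orbits and that its saturation coincides with the set $AH\subset J^m$. The codimension of $AH$ in $J^m$ is $1$, given by the single equation $\mathrm{tr}\,Dv(0)=0$ together with the open condition $\det Dv(0)>0$. Inside $N$, the focus values $\Re a_1,\ldots,\Re a_{k+1}$ are independent real-valued coordinate functions, so $AH^n_k\subset N$ is cut out by the $k$ equations $\Re a_1=\cdots=\Re a_k=0$ and the open condition $\Re a_{k+1}\neq 0$. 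The displayed formula then gives $\mathrm{codim}_{J^m}AH_k=k+1$. The hypothesis $m\ge 2k+3$ is precisely what makes $\Re a_{k+1}$---a polynomial in the coefficients of the $(2k+3)$-jet of $v$---visible in $J^m$. The $SN_k$ case is parallel: the family~\eqref{eq:sn} is a slice for the $G^m$-action on jets with $SN^\ast$ linear part, $\mathrm{codim}_{J^m}SN=1$ (the equation $\det Dv(0)=0$ together with the open condition $\mathrm{tr}\,Dv(0)\neq 0$), and $SN^n_k\subset N$ is cut out inside $N$ by the $k$ equations $a_2=\cdots=a_{k+1}=0$ together with $a_{k+2}\neq 0$. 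This again yields $\mathrm{codim}_{J^m}SN_k=k+1$.

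\paragraph{Where the work is.} The main technical obstacle is the slice/submersion statement, i.e.\ the claim that the map $G^m\times N\to J^m$, $(g,v)\mapsto g_\ast v$, is a submersion onto its image. For $AH_k$ this reduces to solving the homological equation $[L,h]=w$ with $L=i\omega\,z\,\pa/\pa z$: every non-resonant monomial $w$ admits a polynomial solution, while the only resonant monomials of odd degree $2j+1$ are real multiples of $z\,|z|^{2j}\,\pa/\pa z$ and its complex conjugate, corresponding exactly to the coefficient $a_j$ in~\eqref{eq:ah}; hence $\Re a_j$ survives as a diffeomorphism-invariant of the $(2j+1)$-jet. A parallel resonance/center-manifold analysis handles the $SN$ case, where the surviving invariants are the coefficients $a_j$ of the restriction of $v$ to a center manifold tangent to the $x$-axis. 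If one wishes to bypass the slice machinery entirely, the same codimension count can be produced by exhibiting $k+1$ conjugation-invariant polynomial functions on $J^m$ (trace together with the first $k$ focus values in the $AH$ case; determinant together with the first $k$ center-manifold coefficients in the $SN$ case) and checking that their differentials at a point of the stratum are linearly independent, which becomes a routine normal-form perturbation once the invariance of these quantities has been justified.
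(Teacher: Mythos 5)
Your strategy---reducing the codimension count to the action of the group of jets of diffeomorphisms on the normal-form family---has the same skeleton as the paper's, but the key lemma is genuinely different. The paper never proves a slice/submersion statement; its technical core is the Centralizer Lemmas~\ref{lem:cah} and~\ref{lem:csn}: for \emph{every} jet $v$ in the normal-form strata, including the degenerate ones and using the nonlinear part of $v$ in an essential way, the space of jets commuting with $v$ is exactly two-dimensional (spanned by $v$ together with the rotation field, resp.\ with $y\,\partial/\partial y$). This is obtained by solving $[v,w]=0$ explicitly in polar coordinates (resp.\ in powers of $y$) and using the irregular singularity at the origin of the resulting ODEs for the coefficients of $w$. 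Constant stabilizer dimension over the whole stratum then lets the paper read off $\mathrm{codim}_{AH}AH_k=\mathrm{codim}_{AH^n}AH^n_k=k$. Your submersion claim for $(g,v)\mapsto g_*v$ is a weaker statement than the centralizer lemma and, as you say, does follow from the homological equation for the linear part alone by the standard degree-by-degree argument, uniformly over the stratum; that part of your plan is sound and arguably cleaner than the paper's, since it only needs the \emph{sum} $\dim(\mathrm{orb}(v)\cap N)+\dim\mathrm{Stab}(v)$ to be constant rather than each term separately.

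The gap is in passing from the submersion to the codimension formula. The identity $\mathrm{codim}_{J^m}(G^m\cdot S)=\mathrm{codim}_N(S)+\mathrm{codim}_{J^m}(G^m\cdot N)$ is false for an arbitrary $S\subset N$: you need $S$ to be \emph{saturated} in the slice, i.e.\ $\mathrm{orb}(v)\cap N\subset S$ for every $v\in S$, so that the fibres of $G^m\times S\to G^m\cdot S$ coincide with the constant-dimensional fibres of $G^m\times N\to G^m\cdot N$; otherwise $\dim(G^m\cdot S)$ can exceed the predicted value, and it is precisely the upper bound on dimension (lower bound on codimension) that the main theorem consumes. For $S=AH^n_k$ saturation is the assertion that the vanishing pattern $\Re a_1=\dots=\Re a_k=0\neq\Re a_{k+1}$ is an invariant of the equivalence class of the jet, and your justification---``$\Re a_j$ is resonant, hence survives as a diffeomorphism-invariant''---does not prove it. Resonance only says that the coefficient cannot be removed at its own degree by the near-identity normalization step; it does not make the individual $\Re a_j$ invariants of the jet. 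Classically, the $j$-th focus value for $j\ge 2$ is well defined only modulo the preceding ones (lower-degree, non-resonant changes of coordinates act on the degree-$(2j+1)$ resonant coefficients through their interaction with the lower nonlinear terms), and linear rescalings multiply $a_j$ by positive factors; only the statement ``the first non-vanishing $\Re a_j$ occurs at index $k+1$'' is invariant, and establishing that requires tracking the full action of the equivalences between normal forms---exactly the control the paper extracts from the explicit solution of $[v,w]=0$. The same objection applies to the $SN$ case and to your fallback route, since the focus values are not globally defined conjugation-invariant polynomials on $J^m$. Until the saturation of the strata is proved, the inequality $\mathrm{codim}\,AH_k\ge k+1$ is not established.
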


This result was claimed by Takens without proof. It was also used by Dumortier in~\cite{Dumortier1977} for $k=1,2,3,4$ without any justification. The transition from normal forms to all jets seems to be obvious, but should be justified.

Theorem~\ref{thm:codim} for the class $AH$ and $k = 1,2$ may be proved without normal forms because the first two focus values are expressed explicitly through the 3-d and the 5-th jets of the original vector fields, see for instance, \cite{Kusnetsov}. But for the higher focus values this way seems to be hopeless. Theorem~\ref{thm:codim} is proved at the end of the paper.

\subsection{Classes $BT_k, k = 0, 1$}

Germs of class $BT$ have a normal form with a two-jet equal to
\begin{equation}\label{eq:bt}
	y\frac{\partial}{\partial x} + \left(b_{11}x^2+b_{12}xy+b_{22}y^2\right)\frac{\partial}{\partial y}.
\end{equation}

Denote by $BT^n_0$ ($n$ of normalized) the class of all 5-jets  that have the two-jet \eqref{eq:bt} with $b_{11} b_{12}\not = 0$, by $BT^n_1$ the class of all 5-jets that have the two-jet \eqref{eq:sn} with $b_{11} b_{12}= 0$. Obviously, the sets $BT^n_j, \ j = 0,1$ are semi-algebraic. Denote by $BT_j$ the set of 5-jets of vector fields equivalent at the origin to some jet from $SN^n_j$.
It is obvious that
\begin{equation}\label{eqn:codim0}
	\mathrm{codim}\, BT_0 = 2.
\end{equation}

Bogdanov in~\cite{Bogdanov1976} proved that
\begin{equation}\label{eqn:codim1}
	\mathrm{codim}\, BT_1 = 3.
\end{equation}

\subsection{Main theorem}

We are now ready to formulate the precise version of our main result, Theorem~\ref{thm:sing-residual}.

\begin{theorem*}[Singular points in generic two-parameter families]
	There is a finite number of singular points of vector fields in generic two-parameter families of vector fields on compact 2-manifold $M^2$ and they are
	\begin{enumerate}\itemsep=-2pt
		\item either all hyperbolic,
		\item or are all hyperbolic except two which are from the classes $SN_0$ or $AH_0$,
		\item or are all hyperbolic except one from the union:\\
		$W:= SN_0\cup AH_0\cup SN_1\cup AH_1\cup BT $
	\end{enumerate}
	More formally, there exists a residual subset $\mathcal{A}_{sp}$ of $C^q$ ($q\geqslant5$) two parameter families of vector fields that passes only trough vector fields with singular points with the properties mentioned above.
\end{theorem*}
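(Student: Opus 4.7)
My plan is to mimic the proof of Theorem~\ref{thm:k_non_hyp_points}: for each forbidden configuration of singular points listed in the statement I will construct a semi-algebraic stratum $W$ in an appropriate multijet space whose codimension $c$ exceeds the source dimension $rn$, then invoke Theorem~\ref{thm:multijet_transversality} to obtain a residual set of two-parameter families disjoint from $W$. The residual set $\mathcal{A}_{sp}$ will be a finite intersection of such sets. Throughout $k=2$, $n=k+2=4$, so the source dimension of an $r$-multi-jet is $4r$.

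The case of three or more simultaneous non-hyperbolic singular points is already excluded by Theorem~\ref{thm:k_non_hyp_points} with $k=2$. To rule out forbidden pairs (two simultaneous non-hyperbolic singular points with at least one outside $SN_0\cup AH_0$), I will define a 2-multijet stratum $W_2$ by: equal parameters $\varepsilon^1=\varepsilon^2$ (codim $k=2$); the singular point conditions $v_{\varepsilon^i}(x^i)=0$ for $i=1,2$ (codim $4$); the non-hyperbolicity condition at $x^1$ (codim $1$, as used in the proof of Theorem~\ref{thm:k_non_hyp_points}); and a ``bad class'' condition at $x^2$, namely membership in $SN_1\cup AH_1\cup BT\cup SN_2\cup AH_2\cup\dots\cup ZL$. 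By Theorem~\ref{thm:codim} and \eqref{eqn:codim0} the minimal codimension among these classes in $J^m(2)$ is $2$ (attained by $SN_1$, $AH_1$ and $BT$), so $\mathrm{codim}\,W_2\geq 2+4+1+2=9>8=2n$, and Theorem~\ref{thm:multijet_transversality} yields a residual set avoiding $W_2$.

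For the forbidden single-point case (one non-hyperbolic singular point outside $SN_0\cup AH_0\cup SN_1\cup AH_1\cup BT$), the classification of linear parts together with Theorem~\ref{thm:codim}, \eqref{eqn:codim1}, and the observation that $ZL$ has codimension $4$ in $J^m(2)$ shows that any forbidden class has codimension at least $3$ in $J^m(2)$. The corresponding 1-multijet stratum $W_3$ has codimension at least $2+3=5>4=n$, so Theorem~\ref{thm:multijet_transversality} again gives a residual set avoiding $W_3$. Finiteness of singular points is then immediate: each remaining singular point is either hyperbolic or lies in one of the classes whose normal forms \eqref{eq:ah}, \eqref{eq:sn}, \eqref{eq:bt} have the origin as an isolated zero, so by compactness of $M^2$ each vector field $v_\varepsilon$ has finitely many singular points. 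Globalization from charts to the jet bundle over $B^k\times M^2$ is handled exactly as in the proof of Theorem~\ref{thm:k_non_hyp_points} via a countable compact cover with projections in pairwise disjoint coordinate neighborhoods, followed by a countable intersection of residual sets.

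The main obstacle is not the multijet argument itself but the codimension bounds on which it relies: Theorem~\ref{thm:codim} (postponed to the end of the paper) for the classes $AH_k$ and $SN_k$, and Bogdanov's result \eqref{eqn:codim1} for $BT_1$. Granted these bounds, the codimension arithmetic works out just right: equality $c=rn$ in the two borderline (and allowed) cases of a pair from $SN_0\cup AH_0$ and a single point from $SN_1\cup AH_1\cup BT$, and strict inequality $c>rn$ for every forbidden configuration.
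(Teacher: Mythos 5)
Your proposal matches the paper's own proof essentially step for step: Theorem~\ref{thm:k_non_hyp_points} with $k=2$ excludes three or more non-hyperbolic points, the forbidden single point is handled by a codimension-$5$ stratum in the $1$-jet extension over the $4$-dimensional source, the forbidden pair by a codimension-$9$ stratum in the $2$-multijet space over the $8$-dimensional source, and finiteness plus globalization are argued exactly as you describe. The only cosmetic difference is which of the two points carries the codimension-$1$ versus codimension-$2$ membership condition in the pair case, which is immaterial.
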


\begin{proof}
	This is a direct consequence of the multijet transversality theorem. First, consider families of vector fields on an open disc $\mathcal{D}^2 \subset \mathbb{R}^2$.

	Consider a set of vector fields that do not satisfy the properties in the conclusion of the theorem. This set contains vector fields with either non-hyperbolic singular point not in $W$ (Case 1), or two non-hyperbolic singular points, one being outside $SN_0\cup AH_0$ (Case 2). By Theorem~\ref{thm:k_non_hyp_points}, a generic two-parameter family contains vector fields with no more than two non-hyperbolic singular points, so there are no other possibilities.

	Case 1. Let $(\varepsilon,x)$ be a singular point of a vector field $v$ of a family $V$ with the truncated 5-jet $j_*^5(v)$ outside $W$. Then either its linear part is zero, and then $j_*^5(v) \in ZL$, or its linear part belongs to $SN \cup AH  \cup BT$. In the latter case, $ j_*^5(v) \in SN_2 \cup AH_2  \cup BT_1 = R_*$.
	This is a semi-algebraic set of codimension 3, and $ZL$ is a semi-algebraic set of codimension 4.
	
	Consider the set $R$ of five jets of vector fields corresponding to Case 1:
	\begin{equation}\label{eq:onej}
		R=
		\begin{cases}
			v_\varepsilon(x)=0\\
			j_*^5(v)\in R_* \cup ZL
		\end{cases}
	\end{equation}
	
	The set $R$ has codimension 5.
	By the transversality theorem~\ref{thm:multijet_transversality} for $r=1$ (this is an original version by Thom), a five-jet extension of a generic map $B^2 \times \mathcal{D}^2 \to \mathbb{R}^2$ is transversal to this set. But the space $B^2 \times \mathcal{D}^2$ has dimension 4. Hence, there exists a residual set of two-parameter families of vector fields $V$, such that the image of the 5-jet extension of $V$ does not intersect $R$.
	
	Case 2 is treated in a similar way. Consider a two-multi-5-jet extension of $V$. This is a map
	\begin{equation}
		\begin{gathered}
			\mathcal V: \left(B^2 \times \mathcal{D}^2 \right)^2 \to \left(J^5\right)^2,\\
			\left(\varepsilon^1, x^1, \varepsilon^2, x^2\right) \mapsto \left(\varepsilon^1, x^1, V\left(\varepsilon^1, x^1\right), D^1V\left(\varepsilon^1, x^1\right), \varepsilon^2, x^2, V\left(\varepsilon^2, x^2\right),D^1V\left(\varepsilon^1, x^1\right)\right).
		\end{gathered}
	\end{equation}

	Let a vector field $v$ of a family $V$ have two non-hyperbolic singular points $x^1$ and $x^2$. Let one of them, say, $x^1$ be outside the  classes $SN_0$ and $AH_0$. Then
	\begin{equation}\label{eqn:twojets}
		\begin{gathered}
			j_*^5\left(v_{\varepsilon^1}\right)\left(x^1\right) \in SN_1\cup AH_1\cup BT \cup ZL,\\
			j_*^5\left(v_{\varepsilon^2}\right)\left(x^2\right) \in SN_0\cup AH_0\cup BT \cup ZL.
		\end{gathered}
	\end{equation}

	Consider the set $R$ of two-multi-5-jets of such vector fields corresponding to Case 2:
	\begin{equation}\label{eqn:twoj}
		R=
		\begin{cases}
			\varepsilon_1=\varepsilon_2\\
			v_{\varepsilon^1}\left(x^1\right)=0\\
			v_{\varepsilon^2}\left(x^1\right)=0\\
			j_*^5\left(v_{\varepsilon^1}\right)\left(x^1\right) \in SN_1\cup AH_1\cup BT \cup ZL\\
			j_*^5\left(v_{\varepsilon^2}\right)\left(x^2\right) \in SN_0\cup AH_0\cup BT \cup ZL.
		\end{cases}
	\end{equation}
	
	The set $SN_1\cup AH_1\cup BT \cup ZL$ has codimension two, and the set  $SN_0\cup AH_0\cup BT \cup ZL$ has codimension one. Thus we conclude that the set of all the two-multi-5-jets \eqref{eqn:twoj} has  codimension 9. But the source space $\left(B^2 \times \mathcal{D}^2\right)^2$ is of dimension 8. Hence, there exists a residual set of two-parameter families of vector fields $V$, such that the image of the 5-jet extension of $V$ does not intersect $R$.
	
	Thus there exists a residual subset of two-parameter families of vector fields on any open disc $\mathcal{D}^2\subset\mathbb{R}^2$ such that any vector field that occur in these families satisfy the properties 1-3 in the conclusion of the theorem.
	
	Now we proceed in the same way as in the proof of the theorem~\ref{thm:k_non_hyp_points}. For a compact manifold $M^2$ the sets defined by~\eqref{eq:onej} and~\eqref{eqn:twoj} are submanifolds in corresponding jet or multijet spaces. Each of them can be covered by a countably many compact sets that have projections into pairwise disjoint coordinate neighborhoods on $M^2$. For any such a compact set we have a desired result considering families of vector fields on discs in these neighborhoods. By taking a countable intersection of residual sets acquired for families of vector fields on discs we obtain a residual subset $\mathcal{A}_{sp}$ for two-parameter families on $M^2$.
	
	Note that hyperbolic points and non-hyperbolic ones that belong to classes $SN_0$, $AH_0$,  $SN_1$, $AH_1$ and $BT$ are all isolated ones (see for example~\cite{Dumortier1977}). Thus since $M^2$ is a compact manifold, any vector field occurring in families form $\mathcal{A}_{sp}$ has only a finite number of singular points.
\end{proof}

\section{Codimension of Degeneracies}

In this section we prove Theorem \ref{thm:codim}

\subsection{Centralizers}

Denote by $J^m$ the set of $m$-jets of vector fields at a singular point zero in $\mathbb R^2$, and by $D^m$ the set of $m$-jets of diffeomorphisms of $\mathbb R^2$ at a fixed point 0.
The set $D^m$ is a Lie group that acts of $J^m$ by the formula:
$$
g(v) = g_*v.
$$
The dimension of an orbit of $D^m$ passing through a jet $v$ equals to:
\begin{equation}
	\dim \mathrm{orb}\, v = \dim D^m - \dim \mathrm{Stab}\, v,
\end{equation}
where $\mathrm{Stab}\,v$ is the stabilizer of $v$ in $D^m$:
\begin{equation}
	\mathrm{Stab}\,v = \{g \in D^m|g_*v = v\}.
\end{equation}
The set $\mathrm{Stab}\,v$ is a Lie group whose Lie algebra $\mathrm{Cent}\, v$ (centralizer of $v$) consists of $m$-jets of vector fields that commute with $v$:
\begin{equation}
	\mathrm{Cent}\,v = \{w\in J^m \big|[v,w] = 0 \text{ in } J^m \}.
\end{equation}

\begin{lemma}\label{lem:cah}
	For any $v \in AH^n_k, m \ge 2k + 3$, different from a linear jet,
	\begin{equation}
		\dim \mathrm{Cent}\, v = 2
	\end{equation}
\end{lemma}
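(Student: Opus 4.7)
The plan is to establish $\dim \mathrm{Cent}\,v \geq 2$ by exhibiting two explicit independent commuting jets, and then $\leq 2$ by first reducing to rotationally symmetric jets and then solving the resulting equations via a Wronskian argument. For the lower bound, $v$ commutes with itself, and the rotation field $R := iz\partial_z - i\bar z\partial_{\bar z}$ commutes with $v$ because every monomial in the Poincaré--Dulac normal form~\eqref{eq:ah} is $L$-resonant (of the form $z^{p+1}\bar z^p \partial_z$ or its conjugate), and a direct check gives $[R,z^p\bar z^q\partial_z]=i(p-q-1)z^p\bar z^q\partial_z$, which vanishes precisely on resonant monomials. The hypothesis $\Re a_{k+1}\neq 0$ provides a nonzero degree-$(2k+3)$ component in $v$, so $v$ is not proportional to the degree-$1$ jet $R$.

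For the upper bound, let $w \in \mathrm{Cent}\,v$ and decompose $w = \sum_j w_j$ by homogeneous degree, splitting further $w_j = w_j^{res} + w_j^{nr}$ into $L$-resonant and non-resonant parts. Writing $v = L + v_3 + v_5 + \cdots$ with each $v_{2\ell+1}$ resonant (this is the whole point of the normal form), the degree-$j$ component of $[v,w]=0$ reads
\begin{equation}
[L, w_j] + \sum_{\ell\geq 1}[v_{2\ell+1}, w_{j-2\ell-1}] = 0.
\end{equation}
I induct on $j$: assuming $w_i^{nr}=0$ for all $i<j$, each bracket in the sum lies in the resonant subspace, since resonant jets form a Lie subalgebra (by the Jacobi identity applied to $L$, $v_{2\ell+1}$, $w_{j-2\ell-1}$), whereas $[L,\cdot]$ preserves the monomial decomposition and acts by the nonzero scalar $i\omega(p-q-1)$ on each non-resonant monomial $z^p\bar z^q\partial_z$ (and analogously on the $\partial_{\bar z}$-monomials). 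Projecting the displayed equation onto the non-resonant subspace yields $[L,w_j^{nr}]=0$, hence $w_j^{nr}=0$ by injectivity. So $w$ is rotationally symmetric.

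Working now inside this subalgebra, every rotationally symmetric jet has the form $A(\rho)E + B(\rho)R$ with $E := z\partial_z + \bar z\partial_{\bar z}$ and $\rho = r^2$; in particular $v = h_1(\rho)E + h_2(\rho)R$ where $h_1(\rho) = \sum_{j\geq 1}(\Re a_j)\rho^j$ has a zero of exact order $k+1$ at $\rho=0$ by hypothesis, and $h_2(\rho) = \omega + \sum_{j\geq 1}(\Im a_j)\rho^j$. Using $E(\rho)=2\rho$, $R(\rho)=0$, $[E,R]=0$, a direct calculation gives
\begin{equation}
[v, AE + BR] = 2\rho(h_1 A' - A h_1')\, E + 2\rho(h_1 B' - A h_2')\, R,
\end{equation}
and setting both coefficients to zero yields two relations. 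A Wronskian argument on the first (matching leading orders at $\rho=0$ forces $A$ to vanish to order exactly $k+1$, after which $(A/h_1)' = 0$ gives $A = \lambda h_1$) then feeds into the second, giving $B' = \lambda h_2'$ and hence $B = \lambda h_2 + \mu$. Therefore $w = \lambda v + \mu R$, completing the proof. I expect the main difficulty to be the bookkeeping in the induction of the second paragraph, which requires tracking the resonant/non-resonant splitting simultaneously at all degrees and carefully exploiting that every higher-order term of $v$ is already resonant. The hypothesis $m \geq 2k+3$ is used in the third step to guarantee that the leading nonzero term of $h_1$, of phase degree $2k+3$, is visible inside $J^m$ so that the Wronskian argument detects the order of vanishing.
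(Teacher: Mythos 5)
Your proof is correct and reaches the same endgame as the paper's, but by a genuinely different route in the key reduction step. The paper passes to polar coordinates, expands $w$ in Fourier modes $e^{in\ph}$ with $r$-dependent coefficients $c_n,d_n$, and kills the modes $n\neq 0$ by observing that the resulting equations \eqref{eq:cn}--\eqref{eq:dn} have an irregular singularity at $r=0$ (since $f$ vanishes to high order while $g(0)=\omega\neq 0$) and hence admit no nonzero jet solutions; you replace this with the purely algebraic statement that the homological operator $\mathrm{ad}_L$ is injective on non-resonant monomials, applied degree by degree. These are two faces of the same fact (the Fourier modes are exactly the eigenspaces of $\mathrm{ad}_R$), but your version is finite-dimensional linear algebra at each degree and never has to interpret a quotient $g/f$ with vanishing denominator, which is arguably cleaner in the jet setting; the cost is the induction bookkeeping, where incidentally your degree count is off by one: $[v_{2\ell+1},w_d]$ has coefficient degree $2\ell+d$, so the sum should involve $w_{j-2\ell}$ rather than $w_{j-2\ell-1}$ (harmless, since the induction still closes). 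Your final step in the rotationally symmetric subalgebra is literally the paper's $n=0$ case: the relations $h_1A'-Ah_1'=0$ and $h_1B'-Ah_2'=0$ are \eqref{eq:cn}--\eqref{eq:dn} for $n=0$ with $(A,B)=(c_0,d_0)$ up to the substitution $\rho=r^2$, and both you and the paper resolve the first by the same Wronskian observation $c_0=\alpha f$. One caveat you share with the paper: this last step silently treats $[v,w]=0$ as an exact polynomial identity rather than an identity truncated at degree $m$ in $J^m$; under the truncated bracket one must check that the leading term of $h_1A'-Ah_1'$ survives truncation, and your closing remark about where $m\geq 2k+3$ enters covers only the case $\mathrm{ord}\,A=0$. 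Since the paper's own proof glosses over the identical point, this is not a gap relative to its standard.
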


\begin{lemma}\label{lem:csn}
	For any $v \in SN^n_k, m \ge k + 1$, different from a linear jet,
	\begin{equation}
		\dim \mathrm{Cent}\, v = 2
	\end{equation}
\end{lemma}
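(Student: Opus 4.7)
I would tackle the lemma by exhibiting an explicit two-dimensional subspace of $\mathrm{Cent}\,v$ and then establishing the reverse inclusion by means of the weight decomposition associated with the semisimple linear part of $v$. The subspace $\mathrm{span}\{v,\,y\partial_y\}$ clearly lies in $\mathrm{Cent}\,v$: the relation $[v,v]=0$ is automatic, and $[v,y\partial_y]=0$ holds identically because every jet in $SN^n_k$ has the form $P(x)\partial_x+y\,Q(x)\partial_y$, so it is homogeneous of degree one in $y$ and invariant under the $y$-scaling flow. Since $v$ is not a linear jet, its $\partial_x$-component $P$ is a nonzero polynomial of order $k+2$, so $v$ is not proportional to $y\partial_y$ and we obtain $\dim\mathrm{Cent}\,v\ge 2$.

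For the upper bound I would exploit the semisimple linear part $L=\lambda y\partial_y$. A direct computation gives that $\mathrm{ad}_L$ has eigenvalue $\lambda j$ on $x^iy^j\partial_x$ and eigenvalue $\lambda(j-1)$ on $x^iy^j\partial_y$, so $J^m$ splits as a direct sum of weight spaces. Since $v$ itself lies in the zero-weight subspace and $\mathrm{ad}_{v-L}$ is nilpotent (as $v-L$ vanishes to order at least two and therefore raises the filtration degree), the operator $\mathrm{ad}_v$ restricted to the $\mu$-weight subspace has the form $\mu\cdot\mathrm{Id}+(\text{nilpotent})$ and is invertible whenever $\mu\ne 0$. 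Consequently $\ker\mathrm{ad}_v$ is contained in the zero-weight subspace, which consists precisely of jets of the form $w=A(x)\partial_x+y\,C(x)\partial_y$.

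A direct computation now gives
\begin{equation*}
	[v,w] = \bigl(PA'-AP'\bigr)\partial_x + y\bigl(PC'-AQ'\bigr)\partial_y,
\end{equation*}
so $[v,w]=0$ in $J^m$ is equivalent to the two polynomial congruences $PA'\equiv AP'\pmod{x^{m+1}}$ and $PC'\equiv AQ'\pmod{x^{m}}$. Formally, the first equation reads $(A/P)'=0$ and yields $A=c_1P$ for some constant $c_1$; substituting into the second gives $P(C-c_1Q)'\equiv 0$, so $C=c_1Q+c_2$. This presents $w$ as $c_1v+c_2\,y\partial_y$ and, formally, completes the proof.

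The hard part will be to make this formal reasoning rigorous at the $m$-jet level, since the congruences only constrain finitely many Taylor coefficients and could \emph{a priori} admit spurious solutions caused by truncation. I would carry out the argument by expanding $A=\sum_{j\ge 1}A_jx^j$ and reading off the coefficient of $x^{k+j+1}$ in $PA'-AP'$, which gives a recursion of the form $a_{k+2}(j-k-2)\,A_j=\text{(known terms in }A_{j-1},\dots,A_{k+2})$. The hypothesis $a_{k+2}\ne 0$ together with the bound $m\ge k+1$ is what lets this recursion close: it eliminates $A_j$ for $j\ne k+2$ in favor of a single free scalar proportional to $a_{k+2}$, so that $A$ is forced to equal $c_1P$ modulo the truncation. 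The analogous term-by-term analysis of the second congruence then determines $C$ up to a single additional constant, matching the claim $\dim\mathrm{Cent}\,v=2$.
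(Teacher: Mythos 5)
Your overall route is essentially the paper's: reduce to jets of the form $w=A(x)\partial_x+yC(x)\partial_y$, then integrate the two relations $PA'=AP'$ and $PC'=AQ'$ to conclude $w=c_1v+c_2\,y\partial_y$. The one genuine difference is the reduction step. The paper expands $w=\sum c_n(x)y^n\partial_x+\sum d_n(x)y^n\partial_y$ and kills the coefficients with $n\ne 0$ (resp.\ $n\ne 1$) by observing that the resulting ODE $c_n'=(f'/f-ng/f)c_n$ has an irregular singularity at $0$ and hence no nonzero jet solutions. You instead diagonalize $\mathrm{ad}_L$ for the semisimple linear part $L=\lambda y\partial_y$ and note that $\mathrm{ad}_{v-L}$ is nilpotent and weight-preserving, so $\mathrm{ad}_v$ is invertible on every nonzero weight space. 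These produce the same conclusion, but your version is cleaner: it is a finite-dimensional linear-algebra argument that works verbatim on the truncated space $J^m$, whereas the paper's irregular-singularity argument is really a statement about formal power series.

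There is, however, a problem with the last step, which you correctly flag as the hard part but do not actually resolve. Your recursion does \emph{not} close for the top-degree coefficients: the coefficient $A_j$ enters $PA'-AP'$ only through the monomial $x^{k+j+1}$ (and higher), so for $j>m-k-1$ that monomial is annihilated by the truncation to $J^m$ and the first congruence imposes no condition on $A_j$ at all. Concretely, $x^m\partial_x$ and $x^{m-1}y\,\partial_y$ bracket with $v$ into terms of degree $>m$ and hence lie in $\ker\mathrm{ad}_v\subset J^m$ as you have set it up, so the naive truncated kernel is strictly larger than $2$. To be fair, the paper's own proof silently has the same defect --- it integrates $(c_0/f)'=0$ and $d_1'=\alpha g'$ as if for formal power series and never confronts the truncation --- so your writeup is not less rigorous than the original; indeed it is the only one of the two that names the difficulty. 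But the claim that ``the hypothesis $a_{k+2}\ne 0$ together with $m\ge k+1$ is what lets this recursion close'' is not correct as stated, and closing this gap requires either a more careful definition of what $[v,w]=0$ ``in $J^m$'' means for the purposes of the orbit-dimension count, or an accounting of how the spurious top-degree solutions are absorbed by the intersection of the orbit with the set of normal forms.
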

These ``centralizer lemmas'' are proved below.

\subsection {Proof of the Theorem \ref{thm:codim} modulo the Centralizer lemmas}

By definition, the class $AH_k$ consists of jets whose normal form belongs to $AH_k^n$. Hence,

\begin{equation}
	AH_k = \bigcup_{v \in AH^n_k} \mathrm{orb}\, v,
\end{equation}

\begin{equation}
	\dim AH_k = \dim AH_k^n + \dim D^m - 2
\end{equation}

By Lemma \ref{lem:cah}, the dimension of the orbits of $v \in AH^n_0$ is the same. Hence,
\begin{equation}
	\dim AH - \dim AH_k = \dim AH^n - \dim AH^n_k = k.
\end{equation}
This proves Theorem \ref{thm:codim} for $v \in AH$ modulo Lemma \ref{lem:cah}.

In the same way Theorem \ref{thm:codim}  for $v \in SN$  is proved modulo Lemma \ref{lem:csn}.

\subsection {Proof of Centralizer lemma for the class AH}

Let $v \in AH, w \in J^m$. Let us pass to polar coordinates. Then for $v$ from \eqref{eq:ah}
\begin{equation}\label{eqn:vahn}
	v = f(r)\frac{\pa}{\pa r} + g(r)\frac{\pa}{\pa \ph}
\end{equation}
where
\begin{equation}
	f(r) = r \sum_{j=1}^m \Re a_jr^{2j}, \ g(r) =  \omega + \sum_{j=1}^m \Im a_jr^{2j}, \ \omega \not = 0.
\end{equation}
Let
\begin{equation}\label{eq:w_AH}
	\begin{gathered}
		w = F\frac {\pa}{\pa r} + G\frac {\pa}{\pa \ph} = \sum_{n=-\infty}^{\infty} c_n(r) e^{in\ph} \frac {\pa}{\pa r} + \sum_{n=-\infty}^{\infty} d_n(r) e^{in\ph}\frac {\pa}{\pa \ph},\\
		\overline c_n = c_{-n}, \ \overline  d_n = d_{-n}.
	\end{gathered}
\end{equation}
In coordinates, $v$  and $w$ are vector functions, and
\begin{equation}
	[v,w] = L_v w - L_w v = \left(F_r f + F_\ph g - f'F\right)\frac {\pa}{\pa r}
+ \left(G_r f + G_\ph g - g'G\right)\frac {\pa}{\pa \ph},
\end{equation}
where primes are derivatives in $r$.

Relation $[v,w] = 0$ implies
\begin{equation}
	L_v w = L_w v.
\end{equation}
Substituting $v$ and $w$ from~\eqref{eqn:vahn} and~\eqref{eq:w_AH} and equating the coefficients before $e^{in\ph}$, we get
\begin{equation}\label{eq:cn}
	c_n' = \left( \frac {f'}{f} - in \frac {g}{f}\right) c_n,
\end{equation}

\begin{equation}\label{eq:dn}
	d_n' = \left( -in \frac {g}{f}\right) d_n +  \frac {g'}{f} c_n,
\end{equation}

For $n \neq 0$, equation~\eqref{eq:cn} has an irregular singularity at 0, and has no solutions in the jet space at 0, except for identical zero. So $c_n \equiv 0$ for $n \ne 0$. Then,  for this $n$,
\begin{equation}
	d_n' = \left( -in \frac {g}{f}\right) d_n.
\end{equation}
The same argument implies that $d_n \equiv 0$ for $n \ne 0$.

For $n = 0$, $c_0$ is real, hence $c_0 = \alpha f, \ \alpha \in \mathbb R$. Then
\begin{equation}
	d_n' = \alpha g' \Rightarrow d_n = \alpha g + \beta.
\end{equation}
The vector field
$w = \alpha f(r)\frac{\pa}{\pa r} + \left( \alpha g(r)+ \beta\right) \frac{\pa}{\pa \ph} $
obviously commutes with $v$.

Lemma \ref{lem:cah} for the class AH is proved.


\subsection {Proof of Centralizer lemma for the class SN}

The proof is similar to the previous one.

Let $v \in SN, w \in J^m$. For $v$ from \eqref{eq:sn}
\begin{equation}
	v = f(x)\frac{\pa}{\pa x} + y g(x)\frac{\pa}{\pa y}
\end{equation}
where
\begin{equation}
	f(0) = f'(0) = 0, \ g(0) \not = 0.
\end{equation}
Let
\begin{equation}
	w = F\frac {\pa}{\pa x} + G\frac {\pa}{\pa y} = \sum_{n=0}^{\infty} c_n(x) y^n \frac {\pa}{\pa x} + \sum_{n=1}^{\infty} d_n(x) y^n \frac {\pa}{\pa y}.
\end{equation}
In coordinates, $v$  and $w$ are vector functions, and
\begin{equation}\label{eq:w_SN}
	[v,w] = L_v w - L_w v = \left(F_x f + F_y g - f'F\right)\frac {\pa}{\pa x} + \left(G_x f + G_y g - g'G\right)\frac {\pa}{\pa y},
\end{equation}
where primes are derivatives in $x$.

Relation $[v,w] = 0$ implies
\begin{equation}
	L_v w = L_w v.
\end{equation}
Substituting $w$ from~\eqref{eq:w_SN} and equating the coefficients before $y^n$, we get
\begin{equation}\label{eqn:cns}
	c_n' = \left( \frac {f'}{f} - n \frac {g}{f}\right) c_n,
\end{equation}

\begin{equation}\label{eqn:dns}
	d_n' =  (1-n) \frac {g}{f} d_n +  \frac {g'}{f} c_{n-1},
\end{equation}

For $n \neq 0$, equation~\eqref{eqn:cns} has an irregular singularity at 0, and has no solutions in the jet space at 0, except for identical zero. So $c_n \equiv 0$ for $n \ne 0$.

The same argument implies that $d_n \equiv 0$ for $n \ne 1$.

For $n = 0$,  $c_0 = \alpha f, \ \alpha \in \mathbb R$.

For $n = 1, d_n$ satisfies the equation
$$
 d_1' = \frac {g'}{f} c_0 = \alpha g'.
$$
Hence, $d_1 = \alpha g + \beta.$

The vector field
$w = \alpha f(x)\frac{\pa}{\pa x} + ( \alpha g(x)+ \beta) \frac{\pa}{\pa y} $
obviously commutes with $v$.

Lemma \ref{lem:csn} for the class SN is proved.

\end{document}